\newcommand \R {{ \mathbb R}}
\newcommand \C {{ \mathbb C}}
\newcommand \Z {{ \mathbb Z}}
\newtheorem{theorem}{Theorem}[section]
\newtheorem {lemma} {Lemma}[section]
\newtheorem {proposition} {Proposition}[section]
\newtheorem{remark}{Remark}[section]
\newtheorem{claim}{Claim}[section]
\newtheorem{conj}{Conjecture}[section]
\begin{document}
\title[$\mathbb Z^2$ parabolic actions]{Cocycle rigidity and splitting for some discrete parabolic actions}
\author[Danijela Damjanovi\'c and James Tanis]{Danijela Damjanovi\'c$^1$  and James Tanis}
\thanks{ $^1$ Based on research supported by NSF grant DMS-1001884  and NSF grant DMS-1150210 }
\subjclass[2012]{}
\address{Department of Mathematics, \\
      Rice University\\6100 Main st\\Houston, TX 77005}
\email{dani@rice.edu}
\address{Department of Mathematics, \\
      Rice University\\6100 Main st\\Houston, TX 77005}
\email{jtanis@rice.edu}

\maketitle

%The abstract of your paper
\begin{abstract}
We prove trivialization of the first cohomology  with coefficients in smooth vector fields, for a class of $\Z^2$ parabolic actions on $(SL(2, \mathbb R)\times SL(2, \mathbb R))/\Gamma$,  where  the lattice $\Gamma$ is irreducible and co-compact. We also obtain a splitting construction involving first and second coboundary operators in the cohomology with coefficients in smooth vector fields. 
\end{abstract}

%The title of your section 1

\section{Introduction}
Classification of cohomological obtructions for the horocycle flow provided a tool for proving cohomological rigidity for the $\mathbb R^2$ parabolic action on  $SL(2, \mathbb R)\times SL(2, \mathbb R)/\Gamma$, \cite{M}.  This is the action on $SL(2, \mathbb R)\times SL(2, \mathbb R)/\Gamma$ induced by left translations by the elements of the two-dimensional subgroup $$
\left(\begin{array}{cc}
 1& t\\
  0& 1  
\end{array}
\right)\times \left(\begin{array}{cc}
 1& s\\
  0& 1  
\end{array}
\right)$$ where $s, t\in \mathbb R$. 
%We assume throughout that  $\Gamma$ is an irreducible co-compact lattice in $SL(2, \mathbb R)\times SL(2, \mathbb R)$. 
The main ingredient in the proof of cohomological rigidity in \cite{M} is the complete and detailed description of cohomological obstructions for the horocycle flow from \cite{FF}. Moreover, in \cite{DK} the first author  constructed a splitting in cohomology with coefficients in smooth vector fields. 
The splitting  consists essentially in obtaining inverse operators for the first and second coboundary operators over the given action.
The splitting construction in  \cite{DK}  uses in an essential way another part of the result proved in \cite{FF}, namely the fact  that the space of distributional obstructions to solving the cohomological equation over the horocycle flow in each irreducible representation is \emph{finite} dimensional.  An important property of the splitting constructed in  \cite{DK}  is that it is \emph{tame}: inverses constructed for the first and second coboundary operators are tame maps with respect to  the collection of Sobolev norms. A map is tame if the loss of the regularity for the image, for data of \emph{any} given regularity, is \emph{fixed}.  Tame splitting construction lead to a perturbation type result in \cite{DK} for the parabolic $\mathbb R^2$ action defined above.  Namely, for $\Gamma$ an irreducible lattice in $SL(2, \mathbb R)\times SL(2, \mathbb R)$, it is proved in \cite{DK} that the $\mathbb R^2$ parabolic action on  $SL(2, \mathbb R)\times SL(2, \mathbb R)/\Gamma$ is \emph{transversally locally rigid}:  the action sits within a smooth \emph{finite dimensional} family of homogeneous actions such that every sufficiently small transversal perturbation of the family intersects the smooth conjugacy class of the parabolic action. The finite dimensional family of homogeneous actions is precisely defined by the finite dimensional cohomology with coefficients in vector fields, over the parabolic $\mathbb R^2$ action. 

In this paper instead of the $\mathbb R^2$ parabolic action on $SL(2, \mathbb R)\times SL(2, \mathbb R)/\Gamma$, we consider its $\mathbb Z^2$ subactions. As before, $\Gamma$ is assumed throughout to be an irreducible co-compact lattice in $SL(2, \mathbb R)\times SL(2, \mathbb R)$. A representative example from the class of these $\Z^2$ parabolic actions is the time-one $\mathbb Z^2$ parabolic action, namely  the $\mathbb Z^2$ action generated by left translations by the following elements: 
$$
\left(\begin{array}{cc}
 1& 1\\
  0& 1  
\end{array}
\right)\times \left(\begin{array}{cc}
 1& 0\\
  0& 1  
\end{array}
\right),\,\,\,
\left(\begin{array}{cc}
 1& 0\\
  0& 1  
\end{array}
\right)\times \left(\begin{array}{cc}
 1& 1\\
  0& 1  
\end{array}
\right)$$

For the horocycle map (the time-one map of the horocycle flow) on $SL(2, \mathbb R)/\Gamma$, the cohomological obstructions have been completely described by the second author in  \cite{T}. It is proved in \cite{T} that the invariant distributions are a complete set of cohomological obstructions. However, unlike the case of the unipotent flow, the space of invariant distributions for the horocycle map in each irreducible representation has \emph{countable infinite} dimension. For functions for which all the obstructions vanish,  the cohomological problem is solved completely. However, the estimates for the solution of the cohomological equation obtained in \cite{T} are \emph{not tame}. 

In what follows we show vanishing of distributional obstructions for $\mathbb Z^2$ parabolic actions, which implies trivial first cohomology with coefficients in smooth functions. Further, even though the space of invariant distributions in each irreducible representation is infinite dimensional,  we prove existence of a  splitting in cohomology by exploiting the decay in the space invariant distributions obtained in \cite{T}.  The splitting we obtain is \emph{not tame}, but this is only due to the lack of tame estimates for the solution of the first coboundary problem. We comment on possible improvement of non-tame to tame estimates following Conjecture 5.1.  Since the action involves unipotent maps, we extend these cohomological results to cohomology with coefficients in vector fields. At the end we state a conjecture concerning perturbations of $\mathbb Z^2$ parabolic actions. 

%The title of your first subsection in section 

\subsection{Description of Actions and the induced operators}

Let $G := SL(2, \R) \times SL(2, \R)$ and $\Gamma \subset G$ be a cocompact, irreducible lattice.  Let $U = \left(\begin{array}{ll} 0 & 1 \\ 0 & 0 \end{array}\right).$  Let $U_1 = U \times 0$ and $U_2 = 0 \times U$ be the two commuting stable unipotent vector fields on $G$.   The left action $\alpha$ by a lattice in the upper unipotent subgroup of $G$ can be viewed as \begin{equation}\label{action}\alpha: \,\,((n, m), x) \to \left(\begin{array}{ll} 1 & n \\ 0 & 1 \end{array}\right) \times \left(\begin{array}{ll} 1 & m \\ 0 & 1 \end{array}\right) x,\end{equation} where $(n, m) \in S \Z \times T  \Z$, with $S, T \in \R^+$ fixed, and $x \in G/\Gamma$.   

The first coboundary operators $L_1$ and $L_2$ are 
$$L_1(f) =f\circ \alpha((S, 0), \cdot)-f$$
and 
$$L_2(f)=f\circ  \alpha((0, T), \cdot) -f$$
for any $f \in C^\infty(G/\Gamma)$. 

\subsection{Results}

In Section \ref{s:transfer}, we prove that the first cohomology over the discrete parabolic action $\alpha$ on $G/\Gamma$ is indeed trivial (i.e. contains only constant cocycles), and we obtain Sobolev estimates of the transfer function with respect to the given data. 

\begin{theorem}\label{transfer}
Let $\Gamma \subset G$ be as above.  For any $r \geq 0$, there is a constant $C_{r, \Gamma} > 0$ such that for any $f, g \in C^\infty(G/ \Gamma)$ with zero average and that satisfy $L_1 g = L_2 f$, there is a solution $P \in C^{\infty}(G/\Gamma)$ such that 
$$
L_1 P = f \text{ and } L_2 P = g,
$$  
and 
$$
\| P \|_{r} \leq C_{r, \Gamma} \| f \|_{3r + 6}.
$$ 
\end{theorem}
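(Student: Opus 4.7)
The plan is to decompose the zero-average subspace of $L^2(G/\Gamma)$ into a direct integral/sum of irreducible unitary $G$-representations.  Because $\Gamma$ is an irreducible cocompact lattice, every nontrivial irreducible summand is of the form $\pi = \pi_1 \otimes \pi_2$, with neither $\pi_i$ the trivial representation of $SL(2,\R)$.  Inside each such $\pi$, the operator $L_1$ acts as the horocycle map $h_S^{(1)}\otimes \mathrm{id}$ on the first factor and $L_2$ acts as $\mathrm{id}\otimes h_T^{(2)}$ on the second.  Thus the problem decouples: I can first try to solve the equation $L_1 P = f$ within each irreducible using the cohomological results of \cite{T} for the horocycle map, and then check that the cocycle condition forces $L_2 P = g$ automatically.

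The key point is to use the cocycle relation $L_1 g = L_2 f$ to produce the vanishing of obstructions required by \cite{T}.  View $f\in\pi_1\otimes\pi_2$ as a map $\pi_2^\infty\to\pi_1^\infty$.  Any $L_1$-invariant distribution on $\pi$ has the form $D_1\otimes\eta$ with $D_1$ a horocycle-map-invariant distribution on $\pi_1^\infty$ and $\eta$ arbitrary on $\pi_2^\infty$, so it suffices to show that the $\pi_2$-valued distribution $D_1(f)$ vanishes for every $L_1^{(1)}$-invariant $D_1$ supplied by \cite{T}.  Partially applying $D_1$ to the cocycle identity yields
\[
L_2\bigl(D_1(f)\bigr) \;=\; D_1(L_2 f) \;=\; D_1(L_1 g) \;=\; 0,
\]
i.e.\ $D_1(f)$ is an $L_2$-invariant element of $\pi_2^\infty$.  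Since $\pi_2$ is a nontrivial irreducible of $SL(2,\R)$, a standard application of Howe--Moore type arguments (or direct spectral analysis of the principal/complementary/discrete series) shows that there are no nonzero smooth $\exp(TU)$-fixed vectors, hence $D_1(f)=0$.  Swapping the roles of the two factors gives the analogous statement for $g$.  Applying Tanis' solution operator to $f$ in each irreducible then produces $P$ with $L_1 P=f$, and $L_1(L_2 P-g)=L_2 f - L_1 g = 0$ together with the same lack of nonzero invariant smooth vectors in $\pi_1$ forces $L_2 P=g$.

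The main obstacle is the Sobolev control with the specific loss $3r+6$.  In each irreducible the estimate from \cite{T} for the horocycle map is not tame, and the implicit constant depends on the Casimir eigenvalue of the representation; the factors $3r$ and the additive $6$ are expected to come, respectively, from the polynomial growth in the Casimir parameter of the solution operator in \cite{T} and from a bounded number of Sobolev derivatives needed to pass from distributional vanishing to a pointwise/Hilbert-space statement (roughly $2$ per $SL(2,\R)$-factor plus a fixed amount to absorb the spectral gap).  The delicate bookkeeping is to sum the per-representation bounds over the direct integral decomposition so that $\|P\|_r$ is controlled by a single Sobolev norm of $f$.  Once one shows that the Sobolev $r$-norm of the transfer function in each irreducible is bounded by $C_\Gamma \nu^{3r/2+\mathrm{const}}\|f\|_r^{(\pi)}$, where $\nu$ is the Casimir eigenvalue, Parseval and an extra $\nu^{3}$ absorbed by three additional derivatives of $f$ deliver the $3r+6$ loss in the final combined estimate.
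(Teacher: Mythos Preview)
Your overall strategy matches the paper's exactly: decompose into irreducibles $\mathcal{H}_\mu\otimes\mathcal{H}_\theta$ (both factors nontrivial by Howe--Moore, since $\Gamma$ is irreducible), partially apply each $e^U$-invariant distribution $\mathcal{D}$ on the first factor to $f$, use the cocycle identity to see that $\tilde{\mathcal{D}}^1(f)\in C^\infty(\mathcal{H}_\theta)$ is $L_2$-invariant and hence zero in the nontrivial $\mathcal{H}_\theta$, invoke \cite{T} to solve $L_1P=f$ in each irreducible, and then conclude $L_2P=g$ from $L_1(L_2P-g)=0$ together with ergodicity of $u_1$.

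Where your proposal goes wrong is in the Sobolev bookkeeping. You assert that the implicit constant in Tanis' estimate depends on the Casimir eigenvalue and must be re-absorbed by extra derivatives, with ``$\nu^{3r/2+\mathrm{const}}$'' growth to be traded for a $\nu^3$ factor. That is not the mechanism here. Because $\Gamma$ is irreducible, each $\Box_j$ has a uniform spectral gap, and the estimate from \cite{T} already reads $\|P\|_r\le C_{r,\Gamma}\|f\|_{3r+4}$ with one constant valid across \emph{all} irreducibles; no Casimir absorption is needed. The passage from $3r+4$ to $3r+6$ comes from a different place: in $\mathcal{H}_\mu\otimes\mathcal{H}_\theta$ the paper expands $f=\sum_{j,k}c_{j,k}\,u_{\mu,j}\otimes u_{\theta,k}$, solves the one-variable problem $L_1 P_k=\sum_j c_{j,k}\,u_{\mu,j}$ for each fixed $k$, and then forms $P_{\mu,\theta}=\sum_k P_k\otimes u_{\theta,k}$. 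Summing this series in $W^r$ costs two additional derivatives (one Cauchy--Schwarz over $k$ against the Sobolev weights of $u_{\theta,k}$). Thus $3r$ is entirely the loss built into Tanis' theorem, and the $+6$ is his $+4$ plus $+2$ from the tensor summation; your Casimir-growth heuristic should be dropped.
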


In Section \ref{s:split}, we prove the splitting for the first cohomology with coefficients in smooth functions: 

\begin{theorem}\label{split} 
Let $\Gamma \subset G$ be as above.  For any $r \geq 0$, there is a constant $C_{r, \Gamma} > 0$ such that for any $f, g, \phi \in C^{\infty}(G / \Gamma)$ with zero average that satisfy $L_2 f - L_1 g = \phi$, then there exists $P \in C^\infty(G / \Gamma)$ such that 
$$
\|f - L_1 P\|_r \leq C_{r, \Gamma} \| \phi\|_{3r + 10},
$$ 
$$
\|g - L_2 P\|_r \leq C_{r, \Gamma}\| \phi\|_{3r + 10},
$$ 
and
$$
\| P \|_r \leq C_{r, \Gamma}\| f\|_{9 r + 25}.
$$
\end{theorem}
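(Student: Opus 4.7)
The plan is to reduce Theorem~\ref{split} to Theorem~\ref{transfer} by constructing a pair $(f_0,g_0) \in C^\infty(G/\Gamma)^2$ of ``second-cohomology potentials'' for $\phi$, satisfying
$$L_2 f_0 - L_1 g_0 = \phi, \qquad \|f_0\|_r,\ \|g_0\|_r \le C_r\,\|\phi\|_{3r+10}.$$
With such a pair in hand, $(f - f_0, g - g_0)$ is a genuine $\Z^2$-cocycle for $\alpha$, since $L_2(f-f_0) - L_1(g-g_0) = \phi - \phi = 0$, so Theorem~\ref{transfer} produces $P \in C^\infty(G/\Gamma)$ with $L_1 P = f - f_0$ and $L_2 P = g - g_0$. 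The first two estimates of Theorem~\ref{split} are then immediate: $f - L_1 P = f_0$ and $g - L_2 P = g_0$, both controlled by $\|\phi\|_{3r+10}$. The bound on $\|P\|_r$ follows from applying the $3r+6$ estimate of Theorem~\ref{transfer} to $f - f_0$, combined with the triangle inequality and the bound on $\|f_0\|_{3r+6}$.

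The real content is the construction of $(f_0,g_0)$. My approach is to work one irreducible $G$-component at a time: decompose $L^2_0(G/\Gamma) = \bigoplus_\alpha \pi^{(1)}_\alpha \otimes \pi^{(2)}_\alpha$ (the orthocomplement of the constants), where each $\pi^{(i)}_\alpha$ is an irreducible $SL(2,\R)$-representation. In each component $L_1 = L_1^{(1)} \otimes I$ acts only on the first factor as the time-$S$ horocycle-map coboundary, and $L_2 = I \otimes L_2^{(2)}$ acts only on the second; the invariant distributions in each factor are exactly the countable families, with explicit Sobolev decay, catalogued in \cite{T}. Within each component, pick $f_0$ so that the $L_1$-obstructions of $\phi - L_2 f_0$ all vanish---this is possible precisely because the zero-average hypothesis on $\phi$ forces every joint $(L_1,L_2)$-invariant distribution to annihilate $\phi$. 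Explicitly, $f_0$ is an explicit series against the basis dual to the $L_2^{(2)}$-invariants in $\pi^{(2)}_\alpha$, with coefficients obtained by pairing $\phi$ against the $L_1^{(1)}$-invariants in $\pi^{(1)}_\alpha$. Once the $L_1$-obstructions of $\phi - L_2 f_0$ are killed, the single-factor horocycle-map cohomological equation from \cite{T}, applied in the $\pi^{(1)}_\alpha$-variable, produces $g_0$ with $-L_1 g_0 = \phi - L_2 f_0$, i.e.\ $L_2 f_0 - L_1 g_0 = \phi$, with matching Sobolev estimates.

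The main obstacle is quantitative bookkeeping. Unlike the $\R^2$ case of \cite{DK}, where the invariant distributions per irreducible were finite-dimensional and the analogous series were automatically finite, here both the series defining $f_0$ and the subsequent $L_1$-inversion yielding $g_0$ involve countable sums whose convergence in $C^\infty$ and Sobolev control rest entirely on the decay rates from \cite{T}. The technical heart is to keep the loss of regularity linear in $r$ across the two successive passes through \cite{T}-type estimates, while summing over the irreducible decomposition with at most polynomial growth of per-component constants in the Casimir eigenvalue; this is what produces the specific loss exponents $3r+10$ and $9r+25$ appearing in the statement, rather than the $3r+6$ of Theorem~\ref{transfer}.
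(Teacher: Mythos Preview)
Your strategy---build a $\delta^2$-preimage $(f_0,g_0)$ of $\phi$ with controlled norms, then feed the cocycle $(f-f_0,g-g_0)$ into Theorem~\ref{transfer}---is workable in principle but is \emph{not} what the paper does. The paper's argument is asymmetric and avoids your extra layer entirely. In each irreducible $\mathcal H_\mu\otimes\mathcal H_\theta$ it builds a bounded projection-type operator $\mathcal R$ using only the \emph{first}-factor invariant distributions $\{D_n\}\subset\mathcal I(\mathcal H_\mu)$ together with dual vectors $\gamma_n\in C^\infty(\mathcal H_\mu)$, namely $\mathcal R h=\sum_n\gamma_n\otimes\tilde D_n^1(h)$; the decay estimate $(\ref{D_n-est})$ on $D_n$ makes this sum converge with the stated Sobolev loss. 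One then solves $L_1P=\mathcal R^\perp f:=f-\mathcal R f$ directly (all $L_1$-obstructions of $\mathcal R^\perp f$ vanish by construction). Since $\mathcal R$ commutes with $L_2$ and annihilates the range of $L_1$, one computes $L_1(L_2P-g)=\mathcal R^\perp\phi$, and a second application of the $L_1$-solution estimate gives the bound on $\|g-L_2P\|_r$ in terms of $\phi$. No second-factor invariant distributions, no dual basis $\gamma_m^{(2)}$, and no auxiliary $L_2$-solving step enter.

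Two specific issues with your outline. First, the vanishing of the joint obstructions $(D_n\otimes D_m)(\phi)$---which you need in order to solve for the pieces of $f_0$---holds because $\phi=L_2f-L_1g$ lies in the range of $\delta^2$, not because $\phi$ has zero average; the latter is automatic in every nontrivial irreducible and carries no information there. Second, your description of $f_0$ as ``a series against the basis dual to the $L_2^{(2)}$-invariants with coefficients obtained by pairing $\phi$ against the $L_1^{(1)}$-invariants'' does not match what the constraint $\tilde D_n^1(\phi-L_2f_0)=0$ actually forces: that equation reads $L_2\bigl(\tilde D_n^1(f_0)\bigr)=\tilde D_n^1(\phi)$, so one must \emph{solve} $L_2$ in $\mathcal H_\theta$ for each $n$ and then take $f_0=\sum_n\gamma_n^{(1)}\otimes h_n$, which lives in the span of the \emph{first}-factor duals, not the second. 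This extra $L_2$-inversion costs another pass through the \cite{T} estimates before you ever reach $g_0$ or Theorem~\ref{transfer}, so your claimed loss $3r+10$ for both $f_0$ and $g_0$ is optimistic. Moreover, because your $f_0$ depends on $\phi$ (hence on $g$), the bound you obtain on $\|P\|_r$ via Theorem~\ref{transfer} is $C(\|f\|_{3r+6}+\|\phi\|_{*})$ rather than a bound in $\|f\|$ alone as stated; the paper's choice $P=P(f)$ is precisely what delivers the third inequality in the form written.
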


In Section \ref{s:splitVF}, the above results are extended to cohomology with coefficients in smooth vector fields. 

\section{Preliminaries}\label{Prelim}

\subsection{Representation Spaces}

Let $L^2(G / \Gamma)$ be the space of complex-valued square integrable functions with respect to the $SL(2, \R) \times SL(2, \R)$-invariant volume form for $G / \Gamma$.  The left regular representation $L^2(G / \Gamma)$ of $SL(2, \R) \times SL(2, \R)$ decomposes into irreducible, unitary representations as follows: 
\begin{equation}\label{intdec}
L^2(G/\Gamma) = \int_{\oplus_{\mu, \theta}} \mathcal{H}_\mu\otimes \mathcal{H}_\theta.
\end{equation}
%As in \cite{FF}, for each $\mu$, there is a countably infinite orthogonal basis $\{u_n\}_n$ of $\mathcal{H}_\mu$.  Then an element $f_{\mu, \theta} \in \mathcal{H}_\mu \otimes \mathcal{H}_\theta$ is given by 
%\begin{equation}\label{decomp}
%f_{\mu, \theta} = \sum_{(j, k)}  c_{j, k}^{\mu, \theta} u_{\mu, j} \otimes u_{\theta, k}
%\end{equation} 
%for some coefficients $\{c_{j, k}\} \subset \C.$  Moreover, if $f = \int_{\oplus_{\mu, \theta}} f_{\mu, \theta},$ then 
%$$
%\|f\|_{\mathcal{H}}^2 = \int_{\oplus_{ \mu, \theta}} \| f_{\mu, \theta}\|_{\mathcal{H}_\mu \otimes \mathcal{H}_\theta}^2 ds(\mu, \theta)
%$$ 
%$$ 
%= \int_{\oplus_{ \mu, \theta}} \sum_{(j, k)} |c_{j, k}^{\mu, \theta}|^2 \| u_{\mu, j}\|_{\mathcal{H}_\mu}^2 \|u_{\theta, k}\|_{\mathcal{H}_\theta}^2 ds(\mu, \theta).
%$$  

The Lie algebra of $sl(2, \R)$ has basis elements 
$$
U = \left(\begin{array}{cc} 0 & 1 \\ 0 & 0 \end{array}\right), 
\ \ V = \left(\begin{array}{cc} 0 & 0 \\ 1 & 0 \end{array}\right), 
\ \ X = \left(\begin{array}{cc} 1/2 & 0 \\ 0 & -1/2 \end{array}\right),
$$
consisting of the generators for the stable and unstable horocycle flows and the geodesic flow.  
The Lie algebra of $G$ is $sl(2, \mathbb{R})\times sl(2, \mathbb{R})$, which has the basis 
$$
\begin{array}{ccc}
U_1 = U \times I, \ \ U_2 = I \times U, \\
V_1 = V \times I, \ \ V_2 = I \times V, \\
X_1 = X \times I, \ \ X_2 = I \times X.
\end{array}
$$
The \textit{Laplacian} determined by this basis is the elliptic element of the enveloping algebra of $sl(2, \R) \times sl(2, \R)$ defined as 
$$
\triangle = -(U_1^2 + X_1^2 + V_1^2 + U_2^2 + X_2^2 + V_2^2).
$$ 
The elements of $sl(2, \R)$ are skew-adjoint, and thus $\triangle$ is an essentially self-adjoint operator.  
 Let $\triangle_1 = - (U^2 + X^2 + V^2).$  It will sometimes be useful to use the equivalence of operators 
 $$
 (I + \triangle)^r \approx (1 + \triangle_1, I)^r + (I, I + \triangle_1)^r,
 $$
 for any $r \geq 0$.
 
 The$\textit{ Sobolev space of}\textit{ order }$$\textit{r }$ $> 0$ of a unitary representation space $\mathcal H$ of $G$ is the Hilbert space $W^r(\mathcal{H})\subset \mathcal{H}$ defined to be the maximal domain determined by the inner product 
 $$
 \langle f, g\rangle_{W^r(\mathcal{H})} := \langle (1 + \triangle)^r f, g\rangle_{\mathcal{H}}.
 $$  

We denote the Casimir operators in the first and second components of $SL(2, \R)\times SL(2, \R)$ by $\Box_1$ and $\Box_2$, respectively.  Because $\Gamma$ is irreducible, each Casimir operator has a spectral gap in the sense that $spec(\Box_j) \cap \R^+$ has a lower bound $\mu_0 > 0$, for $j = 1, 2$ (see Theorem 15 of \cite{M}).  

All operators in the enveloping algebra are $\textit{decomposable}$ with respect to the direct integral decomposition ($\ref{intdec}$), so that for $r \geq 0$, 
$$
W^r(G/\Gamma) = \int_{\oplus_{(\mu, \theta) \in spec(\Box_1) \times spec(\Box_2)}} W^r(\mathcal{H}_\mu\otimes \mathcal{H}_\theta) d\gamma(\mu, \theta)
$$
%$$
% = \int_{\oplus_{\mu, \theta}} W^r(\mathcal{H}_\mu)\otimes W^r(\mathcal{H}_\theta) d\gamma(\mu, \theta),
%$$ 
with respect to the measure $d\gamma(\mu, \theta)$.  
The distributional dual to $W^r(\mathcal{H})$ is $W^{-r}(\mathcal{H})$.  

 The space $C^{\infty}(\mathcal{H}) := \cap _{r \geq 0} W^r(\mathcal{H})$ consists of infinitely differentiable functions in $\mathcal{H}$.   Now the left quasi-regular representation spaces $\{C^{\infty}(G / \Gamma)\} \cup \{W^r(G / \Gamma)\}_{r \geq 0}$ are defined via unitary equivalence.  Let $\left(C^\infty(\mathcal H)\right)'$, $\left(\left(C^\infty(G/\Gamma)\right)'\right)$, be the space of distributions on $C^\infty(\mathcal H)$, (resp. $C^\infty(G/\Gamma)$).  
 
 \subsection{Invariant Distributions}
 
We now define the invariant distributions for the map $e^U$ in a representation space $\mathcal H$ of $SL(2, \R)$.   The space of smooth functions in $\mathcal H$ is defined analogously, as in \cite{FF}.  The $\textit{ Sobolev space of}\textit{ order }$ $\textit{r }$ $> 0$ is the Hilbert space $W^r(\mathcal{H})\subset \mathcal{H}$ defined to be the maximal domain determined by the inner product $$\langle f, g\rangle_{W^r(\mathcal{H})} := \langle (1 + \triangle)^r f, g\rangle_{\mathcal{H}} $$ for $f, g \in \mathcal{H}.$   Set $C^\infty(\mathcal H) := \cap_{r \geq 0} W^r(\mathcal H)$, and let $\left(C^\infty(\mathcal H)\right)'$ be the space of  distributions on $C^\infty(\mathcal H).$

The space of invariant distributions for the map $e^U$ in the representation space $\mathcal H$ of $SL(2, \R)$ is defined by 
$$
\mathcal{I}(\mathcal{H}) := \{\mathcal D \in \left(C^{\infty}(\mathcal{H})\right)' : \mathcal D (f \circ e^U)  = \mathcal D (f), \text{ for all } f\in C^\infty(\mathcal H)\}.
$$

The invariant distributions for the time-one map are all finite regularity.  See Theorem 1.1 of \cite{T} for a more precise description than what we need here.  

%\begin{proposition}\label{Reg Inv dist} \cite{T}
%Let $\mathcal H$ be an irreducible unitary representation space for $SL(2, \R)$, and suppose the Casimir operator is multiplication by the constant $\mu$ on elements of $\mathcal H$.

%Then $\mathcal{I}(\mathcal{H})$ has infinite countable dimension.   

%Moreover, $\mathcal{I}(\mathcal{H}) \subset W^{-((2 + \Re\sqrt{1 - \mu})/2)}(\mathcal{K}_\mu).$ 
%\end{proposition}

%A more precise description of the regularity of invariant distributions given in Theorem 1.1 of \cite{T}.

\subsection{Induced action on unitary representation space}
The action $(\ref{action})$ is by the lattice $S \Z \times T \Z$ in the upper unipotent subgroup.  In what follows, we will restrict our attention to the cannonical $\Z^2$ action when $T=S=1$, as the results for the $S \Z \times T \Z$ action follow completely analogously. 

 Set 
 \begin{equation}\label{translation}
 u_1 := \left(\begin{array}{cc}1 & 1\\0 & 1\end{array}\right) \times \left(\begin{array}{cc}1 & 0\\0 & 1\end{array}\right) \text{ and } u_2 := \left(\begin{array}{cc}1 & 0\\0 & 1\end{array}\right) \times \left(\begin{array}{cc}1 & 1\\ 0 & 1\end{array}\right).
 \end{equation}
    As before, consider operators $L_1$ and $L_2$ on $L^2(G/\Gamma)$ \begin{equation}\label{L_j} L_1 f(x) := f (u_1 x) - f(x) \text{ and } L_2 f(x) := f( u_2 x)  - f(x). \end{equation}   

%We will write $L_1$ and $L_2$ in irreducible representation spaces of $SL(2, \R) \times SL(2, \R)$ by using orthogonal bases.   
Iterating the annihilation and creation operators from $sl(2, \R)$ gives a standard orthogonal basis $\{u_{\mu, j}\}_{j}$ for each irreducible representation $\mathcal H_\mu$ of $SL(2, \R)$.  Taking tensor products, we use the orthogonal basis $\{u_{\mu, j} \otimes u_{\theta, k}\}_{j, k}$ for the irreducible representation space $\mathcal H_\mu \otimes \mathcal H_\theta$ of $SL(2, \R) \times SL(2, \R)$.  The operators $L_1, L_2$ are defined on $\mathcal{H}_\mu\otimes \mathcal{H}_\theta$ via a unitary equivalence $Q_{\mu, \theta} :L^2(G / \Gamma) \to \mathcal H_\mu \otimes \mathcal H_\theta$ according the formula \begin{equation}\label{57}\left\{\begin{array}{ll}Q_{\mu, \theta} L_1 Q_{\mu, \theta}^{-1} f = 
\int_{\oplus_\mu } c_{j, k}^{\mu, \theta} (\pi(u_1) u_{\mu, j}  - u_{\mu, j}) \otimes u_{\theta, k} \\ Q_{\mu, \theta} L_2 Q_{\mu, \theta}^{-1}  f = \int_{\oplus_\mu } c_{j, k}^{\mu, \theta} u_{\mu, j} \otimes (\pi(u_1) u_{\theta, k}  - u_{\theta, k}).\end{array}\right.\end{equation} %where we use the same notation for $f\in \mathcal H_\mu \otimes \mathcal H_\theta$ as in $(\ref{decomp})$.  

\subsection{Cohomology over $\Z^k$ actions}\label{s:cohomology}
To ease notation for this subsection, we write $M := G/\Gamma.$ Let 
\begin{equation}\nonumber
 \rho:\Z^k\rightarrow Diff ^\infty(M)
\end{equation}
be a smooth $\Z^k$ action on a smooth manifold $M$, preserving a smooth volume form.  A $C^\infty$-diffeomorphism $f:M \rightarrow M$ induces a map on smooth vector fields $X\in \text{Vect}^\infty(M)$ by $f_*X=(Df)\circ X\circ f^{-1}$.  Let $C^l(\Z^k,Vect^\infty(M))$ denote the space of  multilinear maps from $\Z^k\times \dots \times\Z^k$ to $Vect^\infty(M)$.

Then we have the cohomology sequence:
\begin{equation}~\label{cohomology sequence}
 C^0(\Z^k,Vect^\infty(M)) \xrightarrow{\delta_v^1} C^1(\Z^k,Vect^\infty(M))\xrightarrow{\delta_v^2}C^2(\Z^k,Vect^\infty(M)),
\end{equation}
where  the operators $\delta_v^1$ and $\delta_v^2$ are defined as follows. 

For $H \in C^0(\Z^k,Vect^\infty(M))=Vect^\infty(M)$ and $\beta \in C^1(\Z^k,Vect^\infty(M))$ define
\begin{equation}\label{cohom_vector}
\begin{array}{cc}
\delta_v^1H(g) = \rho(g)_*H-H \\ 
(\delta_v^2\beta)(g_1,g_2) = (\rho(g_2)_*\beta(g_1)-\beta(g_1)) - (\rho(g_1)_*\beta(g_2)-\beta(g_2)).
\end{array}
\end{equation}
The first cohomology $H_\rho^1(\mathbb Z^k, Vect^\infty(M))$ over the action $\rho$ with coefficients in $Vect^\infty(M)$ is defined to be $Ker \delta_v^2/Im \delta_v^1$.

If the manifold $M$ is homogeneous and the action $\rho$ is affine, then for $g\in \mathbb Z^k$, the map $\rho(g)_*$ is  linear.  Since $M$ is parallelizable, if there is a basis of vector fields in which all $\rho(g)_*$ are upper triangular with ones on the diagonal, then computing the cohomology $H_\rho^1(\mathbb Z^k, Vect^\infty(M))$ can be inductively reduced to computing the first cohomology with coefficients in smooth functions $H_\rho^1(\mathbb Z^k, C^\infty(M))$ (see Section 4 of \cite{DK}).

The cohomology $H_\rho^1(\mathbb Z^k, C^\infty(M))$ is defined via the sequence
\begin{equation}\label{62}
 C^0(\Z^k,C^\infty(M)) \xrightarrow{\delta^1} C^1(\Z^k,C^\infty(M))\xrightarrow{\delta^2}C^2(\Z^k,C^\infty(M)),
\end{equation}
where the operators $\delta^1$ and $\delta^2$ are defined by:
\begin{equation}\label{cohom_function}
\begin{array}{cc}
\delta^1 f(g) =f\circ \rho(g)-f \\ 
\delta^2\beta(g_1,g_2) = (\beta(g_1)\circ \rho(g_2)-\beta(g_1)) - (\beta(g_2)\circ \rho(g_1)-\beta(g_2)),
\end{array}
\end{equation}
where $f\in C^0(\Z^k, C^\infty(M)) = C^\infty(M)$ and $\beta\in C^1(\Z^k,C^\infty(M))$. Then we have $H_\rho^1(\mathbb Z^k, C^\infty(M))$ over the action $\rho$ with coefficients in $C^\infty(M)$ is defined to be $Ker \delta^2/Im \delta^1$.

We say the first cohomology over $\rho$ with coefficients in $C^\infty(M)$ trivializes if it is isomorphic to $\mathbb R$, i.e. if every $f \in C^\infty(M)$ with zero average is in the image of $\delta^1$. 

We say the first cohomology over $\rho$ with coefficients in $Vect^\infty(M)$ trivializes if it is the same as the cohomology over $\rho$ with coefficients in constant vector fields. 

\section{Proof of Theorem \ref{transfer}}\label{s:transfer}
The argument that follows does not change with the scaling of $S, T$ for the lattice action $S \Z \times T \Z$ by upper unipotent elements described in formula (\ref{action}).  We therefore take $S, T = 1$ for simplicity.

We cite the following well-known Howe-Moore Ergodicity theorem, \cite{FK}.
\begin{theorem}[Howe-Moore Ergodicity theorem]
Suppose that $G$ is a semi-simple Lie group with finite center and has no compact simple factors, and suppose $X$ is an irreducible $G$-space with finite $G$-invariant measure.  If $H$ is a closed, noncompact subgroup of $G$, then $H$ also acts ergodically on $X$.
\end{theorem}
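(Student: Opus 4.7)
The plan is to recast the conclusion as a statement about matrix coefficients of unitary representations, and then exploit the Cartan ($KAK$) decomposition together with a weak-compactness / Mautner-type argument to force any would-be obstruction to ergodicity to produce a $G$-invariant vector, contradicting irreducibility. More precisely, I would first reduce to the following functional-analytic assertion: for any unitary representation $\pi:G\to U(\mathcal{H})$ with no nonzero $G$-fixed vector, the matrix coefficients $g\mapsto \langle \pi(g)v,w\rangle$ vanish at infinity on $G$. Given this, ergodicity of a closed noncompact $H\subset G$ on $X$ follows by applying it to the representation of $G$ on $L^2_0(X)$ (the orthogonal complement of constants, which has no invariant vectors by irreducibility of the $G$-action on $X$): an $H$-invariant $f\in L^2_0(X)$ satisfies $\langle \pi(h)f,f\rangle=\|f\|^2$, so choosing $h_n\to\infty$ in $H$ (possible since $H$ is closed and noncompact) forces $f=0$.

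Next I would prove the matrix-coefficient vanishing. Using the Cartan decomposition $G=KA^+K$, write any sequence $g_n\to\infty$ as $g_n=k_n a_n k_n'$ with $a_n\in A^+$. By compactness of $K$, pass to subsequences with $k_n\to k$, $k_n'\to k'$, and necessarily $a_n\to\infty$ in $A^+$. Since $\pi$ is continuous and unitary, it suffices to show $\langle \pi(a_n)v,w\rangle\to 0$ for all $v,w\in\mathcal{H}$ and all such sequences $a_n$ in $A^+$ going to infinity.

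Assume for contradiction that along some such sequence $\langle \pi(a_n)v,w\rangle\not\to 0$. By weak compactness of the unit ball, extract a subsequence along which $\pi(a_n)v\rightharpoonup v_\infty$ weakly, with $v_\infty\neq 0$. The key computation is now Mautner's trick: for any $u$ in the contracting horospherical subgroup $U^-$ relative to $a_n$ (i.e.\ $a_n^{-1}ua_n\to e$), one has
\[
\pi(u)\pi(a_n)v \;=\; \pi(a_n)\pi(a_n^{-1}ua_n)v \;\longrightarrow\; v_\infty
\]
strongly on the right, hence weakly, so $\pi(u)v_\infty=v_\infty$. Thus $v_\infty$ is fixed by the whole contracting horospherical subgroup, and by symmetry (running the argument with the sequence $a_n^{-1}$, using that $a_n\to\infty$ in $A^+$ pushes other unipotents to identity under conjugation the other way) one obtains fixity under enough unipotents to generate a noncompact normal subgroup.

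The main obstacle, and the last step, is to promote invariance under these horospherical unipotents to invariance under all of $G$. Here one uses the hypotheses that $G$ is semisimple with finite center and no compact simple factors: the group generated by $U^+$ and $U^-$ (together with the normalizer provided by the contracting dynamics) is a normal subgroup with no compact simple quotient, and a standard structure argument shows it must be all of $G$. Hence $v_\infty$ is $G$-invariant, contradicting the assumption on $\pi$. This contradiction yields the matrix-coefficient vanishing, and thereby Howe--Moore. The delicate point I would spend the most care on is exactly this last structural step, since it is what forces the hypothesis of no compact simple factors into the proof; all the preceding Hilbert-space manipulations are essentially soft.
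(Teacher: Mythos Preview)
The paper does not prove this theorem at all: it is quoted as the ``well-known Howe--Moore Ergodicity theorem'' with a citation to \cite{FK}, and is used as a black box to conclude that the cyclic groups generated by $u_1$ and $u_2$ act ergodically on $G/\Gamma$. So there is no proof in the paper to compare your proposal against.

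That said, your sketch is the standard route (reduction to vanishing of matrix coefficients on $L^2_0(X)$, Cartan decomposition, weak-limit extraction, Mautner's lemma, then a structural step promoting horospherical invariance to $G$-invariance), and it is essentially correct. One point deserves tightening: your ``by symmetry, run the argument with $a_n^{-1}$'' remark does not literally work, since the weak limit of $\pi(a_n^{-1})v$ is in general a \emph{different} vector from $v_\infty$, so you cannot conclude directly that $v_\infty$ itself is fixed by the opposite horospherical $U^+$. The honest argument at that stage is the one you already flag as delicate: once $v_\infty\neq 0$ is fixed by the contracting horospherical $U^-$, one shows (via an $\mathfrak{sl}_2$-triple or iterated Mautner argument) that its stabilizer contains the normal subgroup generated by $U^-$; since $G$ is semisimple with finite center and no compact factors, this normal subgroup is a product of simple factors, and a short induction over the factors (or a second application of the same mechanism inside the remaining factors) yields $G$-invariance. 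Your instinct that this is where the ``no compact factors'' hypothesis enters is exactly right.
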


Recall the definition of the operators $u_1$ and $u_2$ in (\ref{translation}).  Note $\overline{\{u_1^k\}}_{k \in \Z}$ and $\overline{\{u_2^k\}}_{k \in \Z}$ are closed, noncompact subgroups, so they act ergodically on $G/\Gamma$.  Hence, the nonconstant components $C \otimes \mathcal H_\theta$ and $\mathcal H_\mu \otimes C$ ($C\in \C)$ do not exist in the decomposition of $L^2(G/\Gamma)$.  In what follows, we only consider $\mathcal H_\mu \otimes \mathcal H_\theta$, where $\mathcal H_\mu$ and $\mathcal H_\theta$ are both nonconstant.
 
%\begin{theorem}\label{transfer}
%Let $\Gamma \subset SL(2, \R) \times SL(2, \R)$ be any irreducible co-compact lattice.  Let $f, g\in C^{\infty}(G/\Gamma)$ have zero average and satisfy $L_1 g = L_2 f$.  Then there is a solution $P \in C^{\infty}(G/\Gamma)$ such that $$L_1 P = f \text{ and } L_2 P = g.$$  Moreover, for all $r \geq 0$, $$\| P \|_{r} \leq C \| f \|_{3r + 4}.$$ 
%\end{theorem}

%\subsection{Proof of Theorem \ref{transfer}} \label{s:transfer}

Theorem \ref{transfer} concerns the existence of a smooth solution $P$ to the cohomological equation $L_1 P = f$ and $L_2 P = g$  and Sobolev estimates of $P$ in terms of $f$.  To prove it, we consider each irreducible component of $ L^2(G/\Gamma)$ individually.

Theorem  1.1 of \cite{T} implies

\begin{lemma}\label{inv_regularity}
Let $\mathcal H$ be an irreducible unitary representation of $SL(2, \R)$.  Then there is some $0 < r < \infty$ such that $\mathcal {I}(\mathcal H) \subset W^{-r}(\mathcal H)$.
\end{lemma}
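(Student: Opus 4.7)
The plan is to read the lemma directly off Theorem 1.1 of \cite{T}, which provides an explicit classification of $\mathcal I(\mathcal H)$ for each irreducible unitary representation $\mathcal H$ of $SL(2,\R)$. Concretely, that theorem produces a countable family of invariant distributions for $e^U$, indexed essentially by a Fourier-type integer variable coming from the $\Z$-periodicity of translation by $1$ along the $U$-direction, together with a Sobolev order for each basis element expressed in terms of the Casimir parameter $\mu$ of $\mathcal H$ and the Fourier index.

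First I would recall the precise description of this countable basis and the individual regularity estimates attached to each element in \cite{T}. The key step is then to observe that these orders are uniformly bounded within a single irreducible representation, i.e.\ to extract a finite $r = r(\mathcal H) > 0$ majorising all of them simultaneously. Once this uniform bound is in place, each basis distribution manifestly sits in $W^{-r}(\mathcal H)$; since $\mathcal I(\mathcal H)$ is spanned (in the distributional dual) by this basis, and since $W^{-r}(\mathcal H)$ is a linear subspace of the dual, the inclusion $\mathcal I(\mathcal H) \subset W^{-r}(\mathcal H)$ follows formally.

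The main obstacle is precisely this uniformity step. For the horocycle flow the analogous statement of \cite{FF} is immediate because $\mathcal I(\mathcal H)$ is at most two-dimensional, so one can just take the larger of the two Sobolev orders. In the present discrete setting $\mathcal I(\mathcal H)$ is genuinely countably infinite-dimensional, and a naive supremum over a finite basis is unavailable. One must therefore exploit the explicit form of the bounds of \cite{T}, observing that a single sufficiently high power of $(I + \triangle)$ — which mixes $U$ with the transverse generators $X, V$ of the enveloping algebra — absorbs the worst-case regularity of the basis distributions uniformly in the Fourier-type index, so that a common $r(\mathcal H)$ suffices.
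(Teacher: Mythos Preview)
Your proposal is correct and coincides with the paper's approach: the paper does not give an independent proof but simply records that the lemma is implied by Theorem~1.1 of \cite{T}. Your additional discussion of the uniformity issue is your own elaboration and is not present in the paper; one small caution is that the step ``each basis distribution lies in $W^{-r}(\mathcal H)$, hence so does their span'' is only immediate for finite linear combinations, so the inclusion $\mathcal I(\mathcal H)\subset W^{-r}(\mathcal H)$ should really be read off directly from the classification in \cite{T} rather than deduced from a basis argument.
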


Our approach follows \cite{M} and \cite{R}.  As in \cite{R}, for each $\mathcal{D} \in \mathcal{I}(\mathcal{H}_\mu)$ define $\tilde{\mathcal{D}}^1 : W^s(\mathcal{H}_\mu\otimes \mathcal{H}_\theta)\rightarrow\mathbb{C}\otimes \mathcal{H}_\theta$ by 
$$
\tilde{\mathcal{D}}^1 := \mathcal{D}\otimes I,
$$ 
and similarly for each $\mathcal{D} \in\mathcal{I}(\mathcal{H}_\theta),$ define $\tilde{\mathcal{D}}^2$ by \begin{equation}\label{tilde dist}\tilde{\mathcal{D}}^2 := I\otimes \mathcal{D}.\end{equation}  
Now extend $\tilde{\mathcal D}^j$ linearly to finite sums of simple tensors, for $j = 1, 2.$  In formula $(\ref{D_j inf})$ below, $\tilde{\mathcal D}^j$ is extended to be defined on infinite sums of simple tensors.

For each $\mu \in spec(\Box)$, there is an orthogonal basis $\{u_{\mu, j}\}_{j \in A_{\mu}} \in C^\infty(\mathcal{H}_\mu)$ of $\mathcal{H}_\mu$, where 
$$
\mathcal{A}_\mu = \left\{\begin{array}{ll} \Z  \ \ \ \ \ \ \ \ \ \ \ \ \ \ \text{ for } \mu > 0 \\ \Z_{ \geq n} \text{ for } \mu = -n^2 + n \leq 0 \text{ and } n \in \Z_{ > 0}\end{array}\right.
$$ 
(see Section 2.4 of \cite{FF}).

For any 
\begin{equation}\label{f decomp}
 f = \sum_{(j, k) \in A_{\mu}\times A_{\theta}} c_{j, k} u_{\mu, j} \otimes u_{\theta, k} \in C^\infty(\mathcal{H}_\mu \otimes \mathcal{H}_\theta),\end{equation}
with $\{c_{j, k}\}_{(j, k) \in A_{\mu}\times A_{\theta}} \subset \C$ and $\mathcal D \in \mathcal I(\mathcal H_\mu)$,
define 
\begin{equation}\label{D_j inf}
\tilde{\mathcal D}^1 (f) := \sum_{k \in A_\theta} \sum_{j \in A_\mu} c_{j, k} \mathcal{D}(u_{\mu, j}) u_{\theta, k}.\end{equation}  
Define $\tilde{\mathcal D}^2$ analogously.

The operators $\tilde{\mathcal D}^j$ map $C^\infty$ functions to $C^\infty$ functions.

\begin{lemma}\label{D^j regular}
Let $(\mu, \theta) \in spec(\Box_1) \times spec(\Box_2)$, and $f\in C^\infty(\mathcal H_\mu \otimes \mathcal H_\theta)$.  Then for all $\mathcal D \in \mathcal{I}(\mathcal{H}_\mu)$ (resp. $\mathcal{I}(\mathcal{H}_\theta))$, we have $\tilde{\mathcal D}^{j}(f) \in C^\infty(\mathcal H_\theta)$ (resp. $C^\infty(\mathcal H_\mu)$).
\end{lemma}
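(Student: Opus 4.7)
The goal is to show that $\tilde{\mathcal D}^1(f) = \sum_{k \in A_\theta} \mathcal D(f_k)\, u_{\theta,k}$, where $f_k := \sum_{j \in A_\mu} c_{j,k}\, u_{\mu,j} \in \mathcal H_\mu$, lies in $W^s(\mathcal H_\theta)$ for every $s \geq 0$; smoothness then follows from $C^\infty(\mathcal H_\theta) = \cap_{s \geq 0} W^s(\mathcal H_\theta)$. The strategy is to reduce the Sobolev estimate in $\mathcal H_\theta$ to a Sobolev estimate on $f$ in $\mathcal H_\mu \otimes \mathcal H_\theta$, using the finite-order regularity of invariant distributions from Lemma \ref{inv_regularity}.

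Concretely, I would first fix $r$ so that $\mathcal D \in W^{-r}(\mathcal H_\mu)$ (Lemma \ref{inv_regularity}), giving the pointwise bound $|\mathcal D(f_k)| \leq \|\mathcal D\|_{W^{-r}(\mathcal H_\mu)} \|f_k\|_{W^r(\mathcal H_\mu)}$. Writing $\lambda_{\mu,j}$ and $\lambda_{\theta,k}$ for the eigenvalues of $I+\triangle_1$ on $u_{\mu,j}$ and $u_{\theta,k}$ (these are Laplace-eigenvectors since they arise from the standard creation/annihilation iteration), the eigenvalues of $I+\triangle$ on $u_{\mu,j}\otimes u_{\theta,k}$ are $\lambda_{\mu,j}+\lambda_{\theta,k}-1$. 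Using the equivalence $(I+\triangle)^{r+s} \approx (1+\triangle_1,I)^{r+s}+(I,1+\triangle_1)^{r+s}$ recorded in the Preliminaries, or directly the trivial inequality $\lambda_{\mu,j}+\lambda_{\theta,k} \geq \max\{\lambda_{\mu,j},\lambda_{\theta,k}\}$, one obtains
\begin{equation*}
\lambda_{\mu,j}^{r}\,\lambda_{\theta,k}^{s} \;\leq\; (\lambda_{\mu,j}+\lambda_{\theta,k})^{r+s}.
\end{equation*}

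Combining these ingredients gives
\begin{equation*}
\|\tilde{\mathcal D}^1(f)\|_{W^s(\mathcal H_\theta)}^2
= \sum_{k \in A_\theta} \lambda_{\theta,k}^{s}\,|\mathcal D(f_k)|^2
\leq \|\mathcal D\|_{W^{-r}(\mathcal H_\mu)}^2 \sum_{k \in A_\theta} \lambda_{\theta,k}^{s} \|f_k\|_{W^r(\mathcal H_\mu)}^2,
\end{equation*}
and expanding $\|f_k\|_{W^r(\mathcal H_\mu)}^2 = \sum_j \lambda_{\mu,j}^r |c_{j,k}|^2$, the last sum is bounded by $\|f\|_{W^{r+s}(\mathcal H_\mu \otimes \mathcal H_\theta)}^2$. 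Since $f \in C^\infty$, this finite bound holds for every $s \geq 0$, so $\tilde{\mathcal D}^1(f) \in C^\infty(\mathcal H_\theta)$. The argument for $\tilde{\mathcal D}^2$ is identical with the roles of $\mathcal H_\mu$ and $\mathcal H_\theta$ swapped.

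The only subtlety I expect is ensuring that the formal manipulation of the double sum defining $\tilde{\mathcal D}^1(f)$ in $(\ref{D_j inf})$ is justified---i.e.\ that $f_k \in W^r(\mathcal H_\mu)$ so that $\mathcal D(f_k)$ is defined, and that the resulting series in $k$ converges in $\mathcal H_\theta$. Both are immediate from the Sobolev estimate above applied with $s = 0$ (for convergence) and for each fixed $k$ (for $f_k \in W^r$, which follows since $\sum_j \lambda_{\mu,j}^r |c_{j,k}|^2 \leq \|f\|_{W^r}^2 < \infty$). Hence the lemma reduces to this one clean Sobolev trace estimate, and no representation-theoretic input beyond Lemma \ref{inv_regularity} is needed.
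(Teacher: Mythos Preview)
Your argument is correct and follows essentially the same route as the paper's proof: invoke the finite regularity of $\mathcal D$ from Lemma~\ref{inv_regularity}, expand in the orthogonal basis $\{u_{\mu,j}\otimes u_{\theta,k}\}$, and bound the Sobolev weights in each factor by a single Sobolev norm on the tensor product. The only cosmetic differences are that the paper uses $ab\le a^2+b^2$ (landing on $\|f\|_{2(r+1)}$) in place of your inequality $\lambda_\mu^{\,r}\lambda_\theta^{\,s}\le(\lambda_\mu+\lambda_\theta)^{r+s}$, and that the $u_{\mu,j}$ are not literally eigenvectors of $I+\triangle_1$ but of the equivalent elliptic operator $I-X^2-\tfrac12(U^2+V^2)$ (formula~(25) of \cite{FF}); your estimates go through unchanged once the relevant equalities are read as norm equivalences.
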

\begin{proof}

We will carry the argument for $\tilde{\mathcal D}^1$, and the case for $\tilde{\mathcal D}^2$ will follow in the same way.  
Using the finite regularity condition from Lemma \ref{inv_regularity} and the fact that $\{u_{\theta, k}\}_k$ is an orthogonal basis, we have that for any $r \geq 0$, 
$$
\|\sum_{k \in A_\theta} \sum_{j \in A_\mu} c_{j, k} \mathcal{D}(u_{\mu, j}) u_{\theta, k}\|_{W^{r}(\mathcal{H}_\theta)}
$$ 
\begin{equation}\label{equa:sums_prod}
\leq C_{r, \Gamma} \left(\sum_{(j, k) \in A_\mu\times A_\theta} |c_{j, k}|^2 \|u_{\mu, j}\|_{W^{r + 1}(\mathcal{H}_\mu)}^2 \|u_{\theta, k}\|_{W^{r + 1}(\mathcal{H}_\theta)}^2\right)^{1/2}.
\end{equation}
Now Formula (25) of \cite{FF} shows that 
$$
\langle [I - (X^2 + \frac{V^2 + U^2}{2})]^{r + 1} u_{\mu, j}, u_{\mu, j}\rangle_{\mathcal H_\mu} = (1 + \mu + 2 k^2)^{r + 1} \| u_{\mu, j}\|_0,
$$
so that 

\begin{align*}
u &= \arctan x & dv &= 1 \, dx
\\ du &= \frac{1}{1 + x^2} dx & v &= x.
\end{align*}

\begin{align*}
(\ref{equa:sums_prod})& \leq C_{r, \Gamma} \left(\sum_{(j, k) \in A_\mu\times A_\theta} |c_{j, k}|^2 (1 + \mu + 2 j^2)^{r + 1} (1 + \theta + 2 k^2)^{r + 1} \|u_{\mu, j}\|_{\mathcal{H}_\mu}^2 \|u_{\theta, k}\|_{\mathcal{H}_\theta}^2\right)^{1/2}
\\
& \leq C_{r, \Gamma} \left(\sum_{(j, k) \in A_\mu\times A_\theta} |c_{j, k}|^2 [(1 + \mu + 2 j^2)^{2(r + 1)}  + (1 + \theta + 2 k^2)^{2(r + 1)}] \|u_{\mu, j}\|_{\mathcal H_\mu}^2 \|u_{\theta, k}\|_{\mathcal{H}_\theta}^2\right)^{1/2}
\\
& \leq C_{r, \Gamma} \left( \sum_{(j, k) \{in A_\mu \times A_\theta} |c_{j, k}|^2 \|u_{\mu, j} \otimes u_{\theta, k}\|_{2(r + 1)}^2 \right)^{1/2} = C_{r, \Gamma} \| f \|_{2(r + 1)} < \infty. 
\end{align*}

\end{proof}  

\begin{lemma}\label{2}
Let $f, g \in C^{\infty}(\mathcal{H}_\mu \otimes \mathcal{H}_\theta)$ be nonzero functions of zero average, and suppose $f, g$ satisfy $L_2 f = L_1 g$.  Then for all $\mathcal{D} \in \mathcal{I}(\mathcal{H}_\mu)$ (resp. $\mathcal{D} \in \mathcal{I}(\mathcal{H}_\theta)$), we have $\tilde{\mathcal{D}}^1(f) = 0$ (resp. $\tilde{\mathcal{D}}^2(g) = 0$).
\end{lemma}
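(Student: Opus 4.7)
The plan is to exploit two facts: (i) $\tilde{\mathcal{D}}^1=\mathcal{D}\otimes I$ and $L_2 = I\otimes(\pi_\theta(u_1)-I)$ act on different tensor factors and therefore commute; and (ii) an invariant distribution $\mathcal{D}\in\mathcal{I}(\mathcal{H}_\mu)$ annihilates $\pi_\mu(u_1)v-v$ for every smooth $v\in\mathcal{H}_\mu$ by definition, so that $\tilde{\mathcal{D}}^1\circ L_1 = 0$ on $C^\infty(\mathcal{H}_\mu\otimes\mathcal{H}_\theta)$.

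Starting from the hypothesis $L_2 f = L_1 g$, I would apply $\tilde{\mathcal{D}}^1$ to both sides. Working in the basis expansion from (\ref{57})--(\ref{D_j inf}), the right-hand side collapses term by term,
$$
\tilde{\mathcal{D}}^1(L_1 g) = \sum_{k\in A_\theta}\Bigl(\sum_{j\in A_\mu} c^g_{j,k}\,\mathcal{D}\bigl(\pi_\mu(u_1)u_{\mu,j}-u_{\mu,j}\bigr)\Bigr)u_{\theta,k} = 0,
$$
since each inner sum vanishes by the $u_1$-invariance of $\mathcal{D}$. On the left-hand side the same bookkeeping gives
$$
\tilde{\mathcal{D}}^1(L_2 f) = (\pi_\theta(u_1)-I)\bigl(\tilde{\mathcal{D}}^1 f\bigr)
$$
under the natural identification $\C\otimes\mathcal{H}_\theta\cong\mathcal{H}_\theta$. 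By Lemma \ref{D^j regular}, $\tilde{\mathcal{D}}^1 f\in C^\infty(\mathcal{H}_\theta)\subset\mathcal{H}_\theta$, so the identity just produced exhibits $\tilde{\mathcal{D}}^1 f$ as a $u_1$-fixed vector in the nontrivial irreducible unitary $SL(2,\R)$-representation $\mathcal{H}_\theta$.

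To finish I plan to invoke the Howe-Moore ergodicity theorem recalled above: the cyclic subgroup $\{u_1^k\}_{k\in\Z}$ is closed and noncompact in $SL(2,\R)$, and the reduction at the beginning of this section has already eliminated trivial factors, so $\mathcal{H}_\theta$ admits no nonzero $u_1$-fixed vector. Hence $\tilde{\mathcal{D}}^1 f = 0$. The symmetric statement $\tilde{\mathcal{D}}^2 g = 0$ for $\mathcal{D}\in\mathcal{I}(\mathcal{H}_\theta)$ follows by exchanging the roles of the two factors and commuting $\tilde{\mathcal{D}}^2$ past $L_1$ instead. The only place requiring any care is ensuring that $\tilde{\mathcal{D}}^1 f$ genuinely lands in the Hilbert space $\mathcal{H}_\theta$, rather than being a merely distributional object on which $\pi_\theta(u_1)-I$ need not even make sense; this is exactly what Lemma \ref{D^j regular} provides, after which the argument reduces to a one-line commutation followed by Howe-Moore.
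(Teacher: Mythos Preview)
Your argument is correct and follows essentially the same route as the paper: both establish the commutation identities $\tilde{\mathcal D}^1 L_2 = L_2 \tilde{\mathcal D}^1$ and $\tilde{\mathcal D}^1 L_1 = 0$, apply $\tilde{\mathcal D}^1$ to the cocycle equation, invoke Lemma~\ref{D^j regular} to place $\tilde{\mathcal D}^1 f$ in $\mathcal H_\theta$, and then use that a nontrivial irreducible $\mathcal H_\theta$ has no nonzero $e^U$-fixed vector. The only cosmetic difference is that the paper phrases the last step as ``ergodicity forces $\tilde{\mathcal D}^1 f$ to be constant, and constants in a nontrivial component vanish'', whereas you phrase it as ``Howe--Moore forbids fixed vectors''; also be aware that the Howe--Moore theorem quoted in the paper is the $G$-space version, so strictly speaking you are using its representation-theoretic consequence (or Mautner's lemma for $SL(2,\R)$).
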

\begin{proof}

 Let $f$ be as in $(\ref{f decomp})$. 
 From the definitions of $(\ref{L_j})$ and $(\ref{tilde dist})$, we have the relations 
 $$
 \Bigg\{\begin{array}{ccc}
 \tilde{\mathcal D}^2 L_1 = L_1 \tilde{\mathcal D}^2,\\ \ \tilde{\mathcal D}^1 L_2 = L_2 \tilde{\mathcal D}^1,\\ \ \tilde{\mathcal D}^1 L_1 = 0 = \tilde{\mathcal D}^2 L_2\end{array}.
 $$   
 Then the assumption $L_2 f = L_1 g$ implies $L_2 (\tilde{\mathcal D}^1(f)) = 0.$   
  
  Then \begin{equation}\label{58}\tilde{\mathcal{D}}^1(f) = C \ a.e.\end{equation} (constant) by ergodicity.  Because $\mathcal H_\theta$ is not the trivial component and $\tilde{\mathcal D}^1(f) \in C^\infty(\mathcal H_\theta)$, by Lemma $\ref{D^j regular}$, it follows that $\tilde{\mathcal{D}}^1(f) = 0$.  
  
The same argument proves $\tilde{\mathcal{D}}^2(g) = 0$ for all $\mathcal{D}^2 \in \mathcal{I}(\mathcal{H}_\theta)$.  \end{proof}

Define 
$$
Ann(\mathcal I(\mathcal H)) := \{ f\in C^\infty(\mathcal H) : D(f) = 0 \text{ for all } D\in \mathcal I(\mathcal H)\}.
$$  
We will use the following result from \cite{T} in the proof of Theorem \ref{transfer}.

\begin{lemma}\label{3}(\cite{T})
Let $\mathcal{H}$ be any unitary representation of $SL(2, \mathbb{R})$, and suppose the spectral gap condition holds.  Let  $r \geq 0$.  Then there is a constant $C_{r, \mathcal H} > 0$ such that for any $f\in Ann(\mathcal{I}(\mathcal{H}))$ there is a unique solution $P$ in $\mathcal{H}$ to 
$$
L_1 P = f,
$$ 
and 
\begin{equation}\label{301}
\| P \|_{W^r(\mathcal{H})} \leq C_{r, \mathcal{H}}
\| f \|_{W^{3r + 4}(\mathcal{H})}. 
\end{equation}
\end{lemma}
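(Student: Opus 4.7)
The plan is to reduce to a single irreducible unitary representation $\mathcal H_\mu$ of $SL(2,\R)$ using the direct integral decomposition and the fact that $L_1$ preserves each irreducible component. The spectral gap hypothesis ensures $\mu$ stays bounded away from the trivial representation, which allows uniform estimates in $\mu$. Within a fixed $\mathcal H_\mu$, I would pass to a concrete realization of the representation---the line model for the principal and complementary series, and an analogous one-dimensional model for the discrete series---in which $U$ acts as $d/dx$ and $\pi(u_1) = \pi(e^U)$ acts as translation by $1$. The cohomological equation then becomes the difference equation $P(x+1) - P(x) = f(x)$ on an appropriate weighted $L^2(\R)$.

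Applying the Fourier transform diagonalizes $L_1$ as multiplication by the symbol $m(\xi) := e^{2\pi i \xi} - 1$, whose zero set is $\Z$. Distributions whose Fourier transforms are supported on $\Z$ are precisely the $e^U$-invariant distributions of $\mathcal H_\mu$, so the countable family $\{D_n : f \mapsto \hat f(n)\}_{n \in \Z}$ exhausts $\mathcal I(\mathcal H_\mu)$. The assumption $f \in Ann(\mathcal I(\mathcal H_\mu))$ therefore forces $\hat f|_{\Z} = 0$, so the formal quotient $\hat P(\xi) := \hat f(\xi) / m(\xi)$ defines a bona fide function rather than a distribution. Uniqueness of $P$ in $\mathcal H_\mu$ comes from the injectivity of $L_1$ on any nontrivial irreducible component, itself a consequence of the ergodicity of $e^U$ established via Howe--Moore earlier in the paper.

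The hard part is converting this formal solution into a genuine Sobolev estimate. Near each $n \in \Z$ one has $m(\xi) = 2\pi i (\xi - n) + O((\xi - n)^2)$, and the vanishing $\hat f(n) = 0$ ensures the quotient is locally bounded at the price of one derivative of $\hat f$; this is organized globally through a Littlewood--Paley style dyadic decomposition around each integer frequency, summed against the polynomial Sobolev weight. Converting the flat Fourier-side norm back to the intrinsic $W^r(\mathcal H_\mu)$ norm introduces additional multiplicative weights coming from the Casimir and the compact generator, and these weights interact with the small divisor factor $m(\xi)^{-1}$ in such a way that the loss grows linearly in $r$ rather than remaining fixed. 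Tracking this interaction through Formula $(25)$ of \cite{FF} applied to the basis $\{u_{\mu, j}\}_j$, and combining it with the finite regularity of invariant distributions guaranteed by Lemma \ref{inv_regularity}, is what should produce the explicit bound $\|P\|_{W^r(\mathcal H_\mu)} \leq C_{r, \mathcal H} \|f\|_{W^{3r + 4}(\mathcal H_\mu)}$. The main obstacle is exactly this linear growth: the countably infinite family of obstructions creates a genuine small-divisor problem whose cost cannot be absorbed into any fixed Sobolev shift, which is the reason the resulting estimate fails to be tame in the sense discussed in the introduction.
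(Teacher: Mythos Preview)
The paper does not prove this lemma: it is stated as a citation of Theorem~1.2 from \cite{T} (note the parenthetical ``(\cite{T})'' in the lemma heading and the sentence preceding it, ``We will use the following result from \cite{T}''), and no argument is supplied here. So there is no proof in the present paper against which to compare your attempt.

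As a standalone sketch, your outline is a plausible heuristic for the line-model approach actually used in \cite{T}, but it is not a proof. Two places where the argument is genuinely incomplete rather than merely compressed: first, the identification of $\mathcal I(\mathcal H_\mu)$ with ``distributions whose Fourier transforms are supported on $\Z$'' is delicate, because membership in $(C^\infty(\mathcal H_\mu))'$ is governed by the intrinsic Sobolev scale built from $X$, $U$, $V$ and not by flat Sobolev norms on $\R$; one must show that the point evaluations $\hat f(n)$ are continuous for this structure and that nothing else is, which is part of the content of Theorem~1.1 of \cite{T}. Second, and more seriously, the passage from the formal solution $\hat P=\hat f/m$ to the quantitative bound $\|P\|_r \le C_{r,\mathcal H}\|f\|_{3r+4}$ is exactly the substance of the result, and your paragraph only names the difficulty (interaction of the small divisor with the Casimir weight) without carrying it out. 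Invoking Formula~(25) of \cite{FF} and Lemma~\ref{inv_regularity} does not by itself produce the exponent $3r+4$; the argument in \cite{T} requires a careful dyadic analysis near each integer frequency together with explicit control of how the $X$- and $V$-derivatives reweight Fourier modes, and that computation is what fixes the loss. What you have written is a correct roadmap, but the estimate \eqref{301} is asserted rather than derived.
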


\begin{remark}\label{rmk:Tvariable}
By Theorem 1.2 in \cite{T}, Lemma \ref{3} holds for the time-$T$ map with a constant $C_{r, T, \mathcal H} > 0$. 

Moreover, the proof of Theorem 1.2 in \cite{T} shows that $C_{r, T, \mathcal H}$ is uniformly bounded in $T$ in a neighborhood of $T =  1$.  This fact will be used in Proposition \ref{splitVF}.
\end{remark}

\noindent{\bf Proof of Theorem \ref{transfer}}:

% Let $f$ be as in $\ref{f decomp}$, and let $g = \sum_{(j, k) \in A_{\mu}\times A_\theta} d_{j, k} u_{\mu, j} \otimes u_{\theta, k},$ where $\{d_{j, k}\}_{(j, k) \in A_{\mu}\times A_\theta} \subset \C$. 
 
 Write $$f = \int_{\oplus (\mu, \theta) \in spec(\Box_1)\times spec(\Box_2)} f_{\mu, \theta} \ d\gamma(\mu, \theta)$$  \text{ and } $$g = \int_{\oplus (\mu, \theta) \in spec(\Box_1)\times spec(\Box_2)} g_{\mu, \theta} \ d\gamma(\mu, \theta),$$ where $f, g \in C^\infty(G / \Gamma).$  Write $f_{\mu, \theta}  = \sum_{(j, k) \in A_\mu\times A_\theta} c_{j, k}^{\mu, \theta} u_{\mu, j} \otimes u_{\theta, k}$ and $g_{\mu, \theta} = \sum_{(j, k) \in A_\mu\times A_\theta} d_{j, k}^{\mu, \theta} u_{\mu, j} \otimes u_{\theta, k},$ where $\{c_{j, k}^{\mu, \theta}\}_{(j, k) \in A_\mu\times A_\theta}, \{d_{j, k}^{\mu, \theta}\}_{(j, k) \in A_\mu\times A_\theta}$ $ \subset \C$ decay rapidly at infinity.

We use the representation parameters $\nu^\mu := \sqrt{ 1 - 4\Re\mu}$ and $\nu^\theta := \sqrt{1 - 4 \Re \theta}$, and we now show that $\sum_{j \in A_{\mu}} c_{j, k} u_{\mu, j} \in C^\infty(G/\Gamma).$  Using formula (36) of \cite{FF}, we have that for any $s \geq \frac{\Re\nu}{2}$, 
\begin{align}\label{106}
\|\sum_{j \in A_{\mu}} c_{j, k} u_{\mu, j}\|_s^2 \nonumber & \approx \nonumber
\sum_{j \in A_{\mu}} |c_{j, k}^{\mu, \theta}|^2 \|u_{\mu, j} \|_s^2 \|u_{\theta, k}\|_s^2 (1 + |k|)^{-2s + \Re\nu^\theta} \nonumber
\\
& = C \sum_{j \in A_{\mu}} |c_{j, k}^{\mu, \theta}|^2 \|u_{\mu, j} \otimes u_{\theta, k}\|_{2s}^2 (1 + |k|)^{-2s + \Re\nu^\theta} \nonumber
\\
& 
\leq C \sum_{(j, k) \in A_{\mu}\times A_\theta} |c_{j, k}|^2 \|u_{\mu, j} \otimes u_{\theta, k}\|_{2s}^2 \nonumber
\\ 
& = C \| f_{\mu, \theta} \|_{2s}^2 < \infty. 
\end{align}

Then $\mathcal D\left(\sum_{j \in A_{\mu}} c_{j, k} u_{\mu, j} \right) = \sum_{j \in A_\mu} \mathcal D\left( c_{j, k} u_{\mu, j}\right)$ for all $k$. 
Then applying Lemma $\ref{2}$, we have that for all $\mathcal D \in \mathcal I(\mathcal H_\mu)$, 
\begin{align*}
\sum_{k \in A_\theta} \mathcal D\left(\sum_{j \in A_{\mu}} c_{j, k}^{\mu, \theta} u_{\mu, j}\right) u_{\theta, k}\nonumber & 
= \sum_{(j, k) \in A_\theta \times A_{\mu}} c_{j, k}^{\mu, \theta} \mathcal D (u_{\mu, j}) u_{\theta, k} 
\\
& = \tilde{\mathcal D}^1(f_{\mu, \theta})\\
&  = 0.
\end{align*}  
Now because $\{u_{\theta, k}\}_{k \in A_\theta}$ is a basis for $\mathcal H_\theta$, we conclude that for all $k$, \begin{equation}\label{D^1 vanish}\mathcal D\left(\sum_{j \in A_{\mu}} c_{j, k}^{\mu, \theta} u_{\mu, j}\right) = 0.\end{equation}

  %Moreover, observe $\mathcal D^1 \in W^{-s}(\mathcal H_\mu)$ for some $s \geq 0$.  So using formula (33) of \cite{FF}, we have that $$\|\sum_{(j, k) \in A_\mu\times A_\theta} c_{j, k} \mathcal{D}^1(u_{\mu, j}) u_{\theta, k}\|_{\mathcal{H}_\theta}$$ $$ \leq C_s \|\sum_{(j, k) \in A_\mu\times A_\theta} c_{j, k} \|u_{\mu, j}\|_s u_{\theta, k}\|_{\mathcal{H}_\theta}$$ $$ \leq C_s \sum_{(j, k) \in A_\mu\times A_\theta} |c_{j, k}|^2 \|u_{\mu, j}\|_s \|u_{\theta, k}\|_{s} $$ $$ = C_s \sum_{(j, k) \in A_\mu\times A_\theta} |c_{j, k}|^2 \|u_{\mu, j} \otimes u_{\theta, k}\|_{s} = C_s \| f \|_s.$$  Then the sum is both zero average and in $\mathcal H_\theta$.  Now $(\ref{58})$ implies $\tilde{\mathcal D}^1 (f) = 0.$  

%Hence, we can again interchange sums in $(\ref{smooth k})$ to get $$(\ref{smooth k}) = \left(\sum_{k} \mathcal D^1( \sum_{j} c_{j, k}^{\mu, \theta} u_{\mu, j}) u_{\theta, k})\right).$$  Because the $\{u_{\theta, k}\}_{k \in A_\theta}$ is a basis, it then follows that $$\mathcal D^1 \left(\sum_{j \in A_{\mu}}c_{j, k}^{\mu, \theta} u_{\mu, j}\right) = 0.$$ 

Recall that $\Box_1$ has a spectral gap at zero, so Theorem  1.2 of \cite{T} together with $(\ref{106})$ and $(\ref{D^1 vanish})$ show that there is some $P_{k}^{\mu, \theta} \in \mathcal{H}_\mu$ such that 
$$
L_1 P_{k}^{\mu, \theta} = \sum_{j \in A_{\mu}}c_{j, k}^{\mu, \theta} u_{\mu, j}
$$ 
and 
\begin{equation}\label{52}
\| P_{k}^{\mu, \theta} \|_r  \leq C_{r, \Gamma} \|\sum_{j \in A_{\mu}}c_{j, k}^{\mu, \theta} u_{\mu, j}\|_{3r + 4}.
\end{equation}
  
  Define 
  $$
  P_{\mu, \theta} := \sum_{k \in A_{\mu}} P_k^{\mu, \theta} \otimes u_{\theta, k}.
  $$  
  Observe that 
  \begin{align*}
  \| P_{\mu, \theta} \|_r & \leq \sum_{k \in A_{\theta}} \|P_k^{\mu, \theta} \otimes u_{\theta, k}\|_r
\\
& \leq \sum_{k \in A_\theta} \left( \|P_k\|_r \| u_{\theta, k}\| + \|P_k\| \|u_{\theta, k}\|_r\right)
\\
& \leq \sum_{k \in A_\theta} \sum_{j \in A_\mu} |c_{j, k}| \|u_{\mu, j}\|_{3r + 4} \| u_{\theta, k}\| + \sum_{k \in A_\theta} \sum_{j \in A_\mu} \|u_{\mu, j} \|_4 \|u_{\theta, k}\|_r
\\ 
&\leq C_{r, \Gamma} \left(\sum_{(j, k) \in A_\mu \times A_{\theta}} |c_{j, k}^{\mu, \theta}|^2 \|u_{\mu, j} \otimes u_{\theta, k}\|_{3r + 6}^2\right)^{1/2}
\\
& = C_{r, \Gamma} \| f_{\mu, \theta}\|_{3r + 6}.
\end{align*}
  
  Now define 
  $$
  P := \int_{\oplus (\mu, \theta) \in spec(\Box_1)\times spec(\Box_2)} P_{\mu, \theta}.
  $$  
  Then for every $r \geq 0$, 
  \begin{align}\label{glue} 
  \| P \|_{r}^2 & =  \int_{\oplus (\mu, \theta)} \|P_{\mu, \theta}\|_r^2\nonumber
\\
& \leq C_{r, \Gamma} \int_{\mu, \theta} \|f_{\mu, \theta}\|_{3 r + 6}^2 d\gamma(\mu, \theta) = C_{r, \Gamma} \| f \|_{3r + 6}^2 < \infty.
\end{align}

Using the relation 
$$
L_1L_2 = L_2 L_1,
$$
 we have 
 $$
 L_1 (L_2 P - g) = L_1 L_2 P  - L_1 g
 $$ 
 $$ 
 = L_2 L_1 P - L_1 g = L_2 f - L_1 g = 0.
 $$  
 Because $u_1$ is ergodic, it follows that $L_2 P - g = C$ on $G/\Gamma$, for some constant $C \in \mathbb{C}$.  Therefore, all irreducible components of $L_2 P - g$ are zero except possibly the trivial one.  Note $g$ has zero average, which implies $C = 0$, and we conclude the proof of Theorem  \ref{transfer}. $ \ \ \Box$

\section{Proof of Theorem \ref{split}}\label{s:split}

In this section we use the ideas from  \cite{DK} to construct the splitting.  However, the important difference is that the space of invariant distributions is infinite dimensional in each irreducible component for the horocycle map.  There is no \emph{a priori} reason why the same strategy as in \cite{DK} would work in the case of an infinite dimensional space of invariant distributions. However, in this particular situation we may use the fast decay of distributions at infinity, which is proven in \cite{T}. \\

Lemma \ref{inv_regularity} shows the invariant distributions for the time-one map are finite regularity.  At this point, we need the following more precise statement.  Section 3 of \cite{T} describes a countably infinite basis $\{\mathcal{D}_n\}_{n \in \Z}$ of invariant distributions for $\mathcal{I}(\mathcal{H})\cap W^{-r}(\mathcal H)$, where $\mathcal H$ is an irreducible unitary representation of $SL(2, \R)$ and $r > 1$.  Lemma 7.6 of \cite{T} shows there is some $C_{r, \mathcal H} > 0$ such that \begin{equation}\label{D_n-est}|\mathcal{D}_n(h)| \leq C_{r, \mathcal H} \| h\|_{3r + 8}n^{-(r + 2)}.\end{equation}

We prove Theorem \ref{split} in one irreducible component at a time, and then we are able to glue the estimates together using the existence of a spectral gap for irreducible lattices of $SL(2, \R) \times SL(2, \R)$.

\begin{lemma}
Consider an irreducible component $\mathcal{H}_\mu \otimes \mathcal{H}_{\theta}$ of $L^2(G/ \Gamma)$.  For $r \geq 0$, there exists a linear map $\mathcal{R}:W^{3 r + 10}(\mathcal{H}_\mu \otimes \mathcal{H}_{\theta})\rightarrow W^r(\mathcal{H}_\mu \otimes \mathcal{H}_{\theta})$ such that 
\begin{enumerate}
\item[(i)] $\|\mathcal{R} f\|_r \leq C_{s, \Gamma} \| f \|_{3r + 10}$.
\item[(ii)] $\tilde{D}_n^1(\mathcal{R} f) = \tilde{D}_n^1(f)$ for all $n \in \mathbb{Z}$.
\item[(iii)] $\mathcal{R}|_{Ann(\langle\{\tilde{\mathcal{D}}_n^1\}_{n \in \mathbb{Z}}\rangle)} = 0$.
\item[(iv)] $\mathcal{R}$ commutes with $L_2$.
\end{enumerate}
\end{lemma}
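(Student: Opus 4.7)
The plan is to build $\mathcal{R}$ from a dual-basis formula that acts on the first tensor factor only. Choose smooth vectors $\{e_n\}_{n \in \mathbb{Z}} \subset C^\infty(\mathcal{H}_\mu)$ biorthogonal to the basis of invariant distributions, i.e.\ $\mathcal{D}_m(e_n) = \delta_{m,n}$, and set
$$\mathcal{R} f := \sum_{n \in \mathbb{Z}} e_n \otimes \tilde{\mathcal{D}}_n^1(f).$$
Since $\tilde{\mathcal{D}}_n^1 = \mathcal{D}_n \otimes I$ takes values in $\mathbb{C} \otimes \mathcal{H}_\theta \cong \mathcal{H}_\theta$, the sum lives in $\mathcal{H}_\mu \otimes \mathcal{H}_\theta$. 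With this definition, properties (ii)--(iv) fall out immediately: (ii) is the biorthogonality identity $\tilde{\mathcal{D}}_m^1(\mathcal{R} f) = \sum_n \mathcal{D}_m(e_n)\, \tilde{\mathcal{D}}_n^1(f) = \tilde{\mathcal{D}}_m^1(f)$; (iii) holds termwise on $Ann$; and (iv) follows from the commutation $\tilde{\mathcal{D}}_n^1 L_2 = L_2 \tilde{\mathcal{D}}_n^1$ already established in the proof of Lemma~\ref{2}, together with the fact that $e_n$ is independent of $f$.

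To construct the $e_n$ themselves, I would use the explicit description of the $\mathcal{D}_n$ from Section~3 of \cite{T}: each $\mathcal{D}_n$ essentially picks out the coefficient at a selected frequency $j(n)$ in the orthogonal basis $\{u_{\mu,j}\}$, so one can take $e_n$ proportional to $u_{\mu,j(n)}$ with a mild Gram--Schmidt correction to enforce full biorthogonality across the countable family. This gives polynomial control $\|e_n\|_s \leq C_{s,\mu}(1+|n|)^{b(s)}$ for an explicit function $b$.

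The only real work is estimate (i). Writing $f = \sum_{k \in A_\theta} f_k \otimes u_{\theta,k}$ with $f_k \in \mathcal{H}_\mu$, one has $\tilde{\mathcal{D}}_n^1(f) = \sum_k \mathcal{D}_n(f_k)\, u_{\theta,k}$, so a slice-wise application of (\ref{D_n-est}) at a slightly elevated regularity yields $\|\tilde{\mathcal{D}}_n^1(f)\|_r \lesssim n^{-(r+2)}\,\|f\|_{3r+8+\epsilon}$. Combined with the tensor-product bound $\|e_n \otimes v\|_r \lesssim \|e_n\|_r \|v\|_0 + \|e_n\|_0 \|v\|_r$ and the polynomial bound on $\|e_n\|_r$, the series defining $\mathcal{R}f$ converges absolutely in $W^r$ with the stated loss of $3r+10$ derivatives; the two extra derivatives beyond the $3r+8$ of (\ref{D_n-est}) are precisely those absorbed by the tensor bookkeeping and by the room needed to guarantee summability. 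The main obstacle is exactly this quantitative balance --- ensuring the polynomial growth of $\|e_n\|_r$ is beaten by the $n^{-(r+2)}$ decay --- and this is the only point at which the infinite-dimensionality of $\mathcal{I}(\mathcal{H}_\mu)$ interacts nontrivially with the argument. It is the decay estimate from \cite{T}, not available in the general theory, that makes the strategy of \cite{DK} applicable here.
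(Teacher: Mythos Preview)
Your approach is essentially identical to the paper's: the paper defines $\mathcal{R}f=\sum_{n}\gamma_n\otimes\tilde{\mathcal D}_n^1(f)$ with $\{\gamma_n\}\subset C^\infty(\mathcal H_\mu)$ dual to $\{\mathcal D_n\}$, verifies (ii)--(iv) exactly as you describe, and for (i) balances the decay $|\mathcal D_n(h)|\lesssim \|h\|_{3r+8}\,n^{-(r+2)}$ against the growth of $\|\gamma_n\|_r$, with the extra two derivatives coming from a Cauchy--Schwarz step over the basis indices. The only substantive difference is that the paper does not construct the dual vectors by hand but simply invokes Lemma~B.1 of \cite{T}, which supplies $\gamma_n$ with the precise bound $\|\gamma_n\|_r\le C_r\,n^r$; this exact exponent $b(r)=r$ is what makes $n^r\cdot n^{-(r+2)}=n^{-2}$ summable, so your sketch should be tightened to ensure the Gram--Schmidt correction does not spoil this growth rate.
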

\begin{proof}

For each $n \in \Z$, let $\gamma_n \in C^{\infty}(\mathcal{H}_\mu)$ be dual to $\mathcal{D}_n$.  By Lemma B.1 in \cite{T}, we can label the distributions $D_n$ and choose the $\gamma_n$ so that $\|\gamma_n\|_r \leq C_r n^r.$

(i)  Define $\mathcal{R}$ on simple tensors $h_1\otimes h_2 \in C^\infty(\mathcal H_\mu \otimes \mathcal H_\theta)$ by $$\mathcal{R} (h_1\otimes h_2):= %\gamma^0 \otimes D^0 (h_1) h_2 + 
\sum_{n \in \Z} \gamma_n \otimes \tilde{D}_n^1(h_1\otimes h_2).$$   By $(\ref{D_n-est})$, it follows that \begin{equation}\label{simple tensors}\|\mathcal R (h_1\otimes h_2)\|_r \leq C_r \| h_1 \otimes h_2\|_{3r + 8},\end{equation} so that $\mathcal R$ is bounded on simple tensors.  Extend $\mathcal R$ linearly to finite sums of simple tensors in $\mathcal H$. 

Then let $f\in C^\infty(\mathcal H_\mu \otimes \mathcal H_\theta)$, and write its decomposition with respect to the basis $\{u_{\mu, j}\otimes u_{\theta, k}\}_{j, k}$, as in $(\ref{f decomp}).$  Define 
\begin{equation}
\label{R}
\mathcal R(f) := \sum_{k \in A_\theta} \mathcal R\left(\left( \sum_{j \in \mathcal A_\mu} c_{j, k} u_{\mu, j} \right) \otimes u_{\theta, k}\right).
\end{equation} 

Then by triangle inequality and (\ref{simple tensors}), we have 
\begin{align}\label{equa:triangle_twice}
\|\mathcal R f\|_{r} &= \|\sum_{k \in A_\theta} \mathcal R\left(\left( \sum_{j \in \mathcal A_\mu} c_{j, k} u_{\mu, j} \right) \otimes u_{\theta, k}\right)\|_{r} \nonumber
\\ 
& \leq C_{r, \Gamma} \sum_{k \in A_\theta} \| \left(\sum_{j \in \mathcal A_\mu} c_{j, k} u_{\mu, j}\right) \otimes u_{\theta, k}\|_{3 r + 8} \nonumber
\\ 
& \leq C_{r, \Gamma} \sum_{(j, k) \in A_\theta \times A_\mu} |c_{j, k}| \|u_{\mu, j} \otimes u_{\theta, k}\|_{3 r + 8}. 
\end{align}
Multiplying and dividing the summand by $(1 + j^2 + k^2)$, Cauchy-Schwarz gives 
\begin{align*}
(\ref{equa:triangle_twice}) 
& \leq C_{r, \Gamma} \left(\sum_{(j, k) \in A_\theta \times A_\mu} |c_{j, k}|^2 (1 + j^2 + k^2)^2\|u_{\mu, j} \otimes u_{\theta, k}\|_{3 r + 8}^2\right)^{1/2}
\\  
&  \leq C_{r, \Gamma} \| f \|_{3r + 10}.
\end{align*}

(ii)  
Again, let $f$ be as in $(\ref{f decomp}$).  By the proof of Part (i), the series defining $\mathcal R f$ converges unconditionally in $W^r(\mathcal H_\mu \otimes \mathcal H_\theta)$, and by Lemma $\ref{D^j regular}$, $\tilde{\mathcal D}_m^1$ is a bounded operator on this space.  Then we can interchange $\tilde{\mathcal{D}}_m^1$ with the sums and conclude 
\begin{align}\label{interchange} 
\tilde{\mathcal{D}}_m^1(\mathcal{R} f) & = \tilde{\mathcal{D}}_m^1\left(\sum_{k \in A_\theta} \sum_{n \in \Z} \gamma_n \otimes D_n\left( \sum_{j \in A_\mu} c_{j, k} u_{\mu, j}\right) u_{\theta, k}\right)\nonumber
\\
& = \sum_{k \in A_\theta} \sum_{n \in \Z} \tilde{\mathcal{D}}_m^1\left( \gamma_n \otimes D_n\left( \sum_{j \in A_\mu} c_{j, k} u_{\mu, j}\right) u_{\theta, k}\right).
\end{align}
Now because $\mathcal D_m$ is dual to $\gamma_m$, we get  
$$
(\ref{interchange}) = \sum_{k \in A_\theta} D_m\left( \sum_{j \in A_\mu} c_{j, k} u_{\mu, j}\right) u_{\theta, k} = \tilde{\mathcal D}_m^1 (f).
$$ 

(iii)  Let $f\in Ann(\langle \{\tilde{\mathcal D}_n^1\}_{n \in \Z}\rangle)$ be as in ($\ref{f decomp}$).  Again, we may interchange sums by Part (i) and by $(\ref{106})$ to get 
\begin{align*}
\mathcal R f & = \sum_{n \in \Z} \sum_{k \in A_\theta} \gamma_n \otimes D_n\left(\sum_{j \in A_\mu} c_{j, k} u_{\mu, j}\right) u_{\theta, k}\\
&  = \sum_{n \in \Z} \gamma_n \otimes \sum_{k \in A_\theta} D_n\left(\sum_{j \in A_\mu} c_{j, k} u_{\mu, j}\right) u_{\theta, k}\\
& = \sum_{n \in \Z} \gamma_n \otimes \sum_{(j, k)\in A_\mu \times A_\theta} c_{j, k} D_n(u_{\mu, j}) u_{\theta, k}\\
&  = \sum_{n \in \Z} \gamma_n \otimes \tilde{\mathcal D}_n^1(f) = 0.
\end{align*}

(iv)  The operator $L_2$ is bounded on $W^{3r + 9}(\mathcal{H}_\mu)$, so $$L_2 \mathcal{R}(f) = \left( \sum_{k \in A_\theta} \sum_{n \in \Z} D_n\left(\sum_{j \in A_\mu} c_{j, k} u_{\mu, j}\right) \gamma_n \otimes L_2 u_{\theta, k})\right)$$ $$ = \mathcal R\left( \sum_{(j, k) \in A_\mu \times A_\theta} c_{j, k} u_{\mu, j} \otimes L_2 u_{\theta, k})\right).$$

\end{proof}

$\textit{Proof of Theorem }\ref{split}:$  With this, the same argument used to prove Theorem 3.2 of \cite{DK} proves Theorem $\ref{split}$.  We provide it here for the convenience of the reader.  

Define $\mathcal R^\bot(f) = f - \mathcal R f.$  Then property (ii) implies $\tilde{\mathcal D}_n^1(\mathcal R^\bot f) = 0$ for every $n$.  Then property (i) and the argument below $(\ref{D^1 vanish})$ gives a solution $P \in C^\infty (\mathcal H)$ satisfying $L_1 P = \mathcal R^\bot f,$ and 
\begin{align*}
\| P \|_r &\leq C_{r, \Gamma} \| \mathcal R^\bot f\|_{3r + 6}
\\
& \leq C_{r, \Gamma} \| f\|_{3(3r + 6) + 10}  = C_r \| f \|_{9 r + 28}.
\end{align*}  

Moreover,  $$L_2 f - L_1 g = \phi,$$ so by property (iii) and (i v), 
\begin{align*} 
\mathcal R^\bot \phi & = \mathcal R^\bot (L_2 f - L_1 g) \\
& = \mathcal R^\bot L_2 f - L_1 g = L_2 \mathcal R^\bot f - L_1 g\\
& =  L_2 L_1 P  - L_1 g = L_1 (L_2 P - g).
\end{align*}

Because $L_1 (L_2 P - g) \in Ann(\{\tilde{\mathcal D}_n^1\}_{n \in \Z})$, we again use the argument below $(\ref{D^1 vanish})$ to conclude $$\| L_2 P - g\|_r \leq \| \mathcal R^\bot \phi\|_r \leq C_{r, \Gamma} \|\phi\|_{3r + 10}.$$  

The estimate of $\|L_1 P - f\|_r$ follows analogously.

Now we glue estimates from each irreducible component, as in equation $(\ref{glue})$.  This concludes the proof of Theorem \ref{split}.  
$\ \ \Box$

%Theorem \ref{split} holds for the representation space $\mathcal {H} = L^2(G/\Gamma)$ by gluing estimates from each irreducible component, as in equation $(\ref{glue})$.  %Given $f$ and $g$, consider in each irreducible component.  Solve for $P_{\mu, \theta}$ in $\mathcal H_\mu \otimes \mathcal H_\theta$.  Glue the $P_{\mu, \theta}.$  The Sobolev estimate splits. 
%\foot{Where do we mention that we use spectral gap and why it exists for the irreducible lattice??}

%\section{Conjecture on the local structure about discrete parabolic actions on $SL(2, \mathbb R)\times SL(2, \mathbb R)/\Gamma$.}

\section{Cohomology with coefficients in vector fields, splitting and conjecture on rigidity of perturbations}\label{s:splitVF}

The motivation for studying the first cohomology over $\mathbb Z^2$  parabolic actions on $SL(2, \mathbb R)\times SL(2, \mathbb R)/\Gamma$ is to understand the local structure about these actions. For the corresponding continuous time action, which is in fact  a  \emph{maximal} unipotent abelian action on $SL(2, \mathbb R)\times SL(2, \mathbb R)/\Gamma$, the local picture has been understood to large extent in \cite{DK}. Namely, the continuous time action is transversally locally rigid. A similar picture for the discrete time sub-action of the continuous time unipotent action would be a stronger result. 

A general approach towards understanding perturbations  of an action is to study the first cohomology over the the given $\mathbb Z^2$ action $\rho$  \emph{with coefficients in vector fields}, rather than only the cohomology with coefficients in smooth functions (see Subsection \ref{s:cohomology}). This is because the formal tangent space at $\rho$ to the space of all smooth $\mathbb Z^2$ actions on the given manifold $M$ is precisely the space of smooth cocycles over $\rho$ with coefficients in vector fields. Essentially, the first cohomology with coefficients in vector fields can be thought of as the first approximation to the set of actions in a small neighborhood of the given action, modulo smooth conjugacy classes. 
Now, if it turns out that the cohomology $H_\rho^1(\mathbb Z^2, Vect^\infty(M)) $ is finite dimensional  and well understood, and if there is a splitting in cohomology which is tame, then it may be possible to carry out an iterative procedure (similar to classical KAM iterative scheme, or proofs of generalized implicit function theorems) to obtain a result on perturbations of the action. For smooth Lie group actions a general result of this type is obtained in \cite{D}. So far a similar approach has not been applied to discrete abelian group actions with non-trivial but finite dimensional  $H_\rho^1(\mathbb Z^2, Vect^\infty(M))$ and the examples in this paper may be the first ones amenable to this technique. We remark further that for parabolic actions one does not have rich geometric structure as for partially hyperbolic actions, so to study general perturbations of such actions the only successful approach so far has been the one mentioned above: via the first cohomology and an iterative procedure.   

In this section we show that this approach is very likely to be successful in the context of parabolic $\mathbb Z^2$ actions on $SL(2, \mathbb R)\times SL(2, \mathbb R)/\Gamma$: we prove that the first cohomology with coefficients in vector fields is indeed finite dimensional and fairly simple to describe. After that, using the results from previous sections we show that there is a splitting in cohomology via an explicit construction. Finally, based on these results we state a conjecture on the local structure about these $\mathbb Z^2$ parabolic actions.

\subsection{Action induced operators on vector fields}
%Let $\{\hat{U}$, $\hat{X}, \hat{V}\}$ be the frame of the cotangent bundle dual to the frame $\{U, X, V\}$ of the tangent bundle.  
If $v$ is a vector in $\R^3$, let $v^\tau$ be its transpose.  Let \begin{equation}\label{coordinates}h(x) U + g(x) X + f(x) V := (h(x), g(x), f(x))^{\tau}.\end{equation}

\begin{lemma}\label{lower_star}
Let $\sigma(x) = (h(x), g(x), f(x))^\tau$ be given in (\ref{coordinates}).  Then 
\begin{align*}(e^{U})_*\sigma(x) & := D e^{-U} \cdot \sigma\circ e^{U} \\
& = \left(\begin{array}{rrr} 1 &  1 & -1 \\ 0 & 1 & -2 \\ 0 & 0 & 1\end{array}\right) (h(x), g(x), f(x))^\tau.
\end{align*}
\end{lemma}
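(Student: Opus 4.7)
The plan is to reduce the claim to a single computation inside the Lie algebra, namely the matrix of $\mathrm{Ad}(e^{-U})$ acting on $sl(2,\R)$ in the ordered basis $(U,X,V)$. First I would trivialize $TG$ by the global left-invariant frame $(U,X,V)$, so that a vector field $\sigma$ corresponds to the $sl(2,\R)$-valued function $\bar\sigma(x):=h(x)U+g(x)X+f(x)V$. In this trivialization the operator $\sigma\mapsto De^{-U}\cdot\sigma\circ e^U$ is a pointwise linear map on the fibre $sl(2,\R)$ (possibly composed with a translation of the argument), and my task is to identify that fibre map.

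For the identification I would write $\sigma(y)=DL_{y}\bar\sigma(y)$ for $y$ in the image of $e^U$, and factor $DL_{x}$ out using the fact that left and right translations on $G$ commute. After this cancellation the pointwise map reduces to $DL_{e^{-U}}\circ DR_{e^{U}}\bigl|_{e}$, which is the derivative at the identity of the inner automorphism $c_{e^{-U}}\colon h\mapsto e^{-U}he^{U}$, i.e.\ the adjoint action $\mathrm{Ad}(e^{-U})$. This also explains why the right-hand side of the lemma can be written purely in terms of the coefficient vector $(h(x),g(x),f(x))^\tau$: once the conjugation has been isolated, the translation of arguments is suppressed in the notation and absorbed into the meaning of that coefficient vector.

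It remains to compute $\mathrm{Ad}(e^{-U})=\exp(-\mathrm{ad}\,U)$ in the basis $(U,X,V)$. Using the explicit matrices recorded in Section \ref{Prelim} one directly checks the bracket relations $[U,U]=0$, $[U,X]=-U$, $[U,V]=2X$. Thus $\mathrm{ad}\,U$ is strictly upper triangular in the ordered basis $(U,X,V)$ and $(\mathrm{ad}\,U)^3=0$, so the exponential series terminates after two terms:
\[
\exp(-\mathrm{ad}\,U)=I-\mathrm{ad}\,U+\tfrac{1}{2}(\mathrm{ad}\,U)^{2}.
\]
Evaluating on basis vectors gives $U\mapsto U$, $X\mapsto X+U$, and $V\mapsto V-2X-U$, which assembles into the matrix displayed in the lemma. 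The only real subtlety is bookkeeping with sign conventions (making sure that $\mathrm{Ad}(e^{-U})$ and not $\mathrm{Ad}(e^{U})$ appears, given how the pushforward is defined, and matching this against the $e^{\pm U}$ in $De^{-U}\cdot\sigma\circ e^{U}$); once that is settled the result is a finite exponential of a nilpotent matrix and presents no genuine analytic obstacle.
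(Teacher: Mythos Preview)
Your proposal is correct and follows essentially the same route as the paper. The paper's Claim~\ref{commutation} computes $\frac{d}{dt}\big(e^{-U}e^{tY}\big)\big|_{t=0}$ for $Y\in\{U,X,V\}$ by rewriting $e^{-U}e^{tY}=e^{t\,e^{\mathrm{ad}_{-U}}(Y)}e^{-U}$ and differentiating; this is exactly the computation of $\mathrm{Ad}(e^{-U})=\exp(-\mathrm{ad}\,U)$ on basis vectors that you carry out, using the same bracket relations $[U,X]=-U$, $[U,V]=2X$ and the nilpotency of $\mathrm{ad}\,U$. Your write-up is a bit more explicit about the conceptual reduction (identifying the pushforward with the adjoint action via trivialization of $TG$), while the paper simply records the three curve-derivatives, but the mathematical content is identical.
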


Lemma \ref{lower_star} will follow immediately from Claim \ref{commutation}.

%\section{Appendix}

\begin{claim}\label{commutation}
We have 
$$
\begin{array}{lll}\frac{d}{dt}(e^{-U} e^{t U})|_{t = 0} = U e^{-U}\\  \frac{d}{dt}(e^{-U} e^{t X})|_{t = 0} = (X + U) e^{-U} \\ \frac{d}{dt}(e^{-U} e^{t V})|_{t = 0} = (V - 2 X - U) e^{-U}.\end{array}$$
\end{claim}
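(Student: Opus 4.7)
The strategy is to use the adjoint identity $e^{-U} Y e^{U} = \operatorname{Ad}(e^{-U})(Y) = e^{-\operatorname{ad} U}(Y)$, combined with the fact that $\operatorname{ad} U$ is nilpotent on the three-dimensional span $\langle U, X, V\rangle$, so the exponential series terminates.

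First I would note that for $Y \in sl(2,\R)$,
$$\frac{d}{dt}\bigl(e^{-U} e^{tY}\bigr)\Big|_{t=0} = e^{-U}\, Y.$$
To write $e^{-U} Y$ in the form $(\,\cdot\,)\,e^{-U}$ that appears on the right-hand side of the claim, I insert $e^{U} e^{-U} = I$:
$$e^{-U} Y \;=\; \bigl(e^{-U} Y e^{U}\bigr)\, e^{-U} \;=\; \operatorname{Ad}(e^{-U})(Y)\cdot e^{-U} \;=\; e^{-\operatorname{ad} U}(Y)\cdot e^{-U}.$$
So everything reduces to computing $e^{-\operatorname{ad} U}(Y)$ for $Y = U, X, V$.

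Next I would compute the relevant brackets in $sl(2,\R)$ directly from the matrix definitions of $U, X, V$: one finds $[U,U]=0$, $[U,X]=-U$, and $[U,V]=2X$. Consequently $(\operatorname{ad} U)^2 X = -[U,U] = 0$, $(\operatorname{ad} U)^2 V = [U,2X] = -2U$, and $(\operatorname{ad} U)^3 V = 0$. The series $e^{-\operatorname{ad} U} = \sum_{k\geq 0} \tfrac{(-\operatorname{ad} U)^k}{k!}$ therefore terminates and yields
$$e^{-\operatorname{ad} U}(U) = U,\qquad e^{-\operatorname{ad} U}(X) = X - [U,X] = X + U,$$
$$e^{-\operatorname{ad} U}(V) = V - [U,V] + \tfrac12 [U,[U,V]] = V - 2X - U,$$
which, multiplied on the right by $e^{-U}$, give precisely the three identities claimed.

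The proof is essentially a routine Lie-algebra calculation; no genuine obstacle arises. The only thing to double-check is the sign conventions in the bracket computations, since a sign error in $[U,X]$ or $[U,V]$ would change the final formulas. After establishing Claim \ref{commutation}, Lemma \ref{lower_star} follows by evaluating $De^{-U}\cdot \sigma\circ e^{U}$ component by component: writing $\sigma = hU + gX + fV$, each basis vector field pulls back via the chain rule to one of the three expressions in Claim \ref{commutation}, and assembling the coefficients in the ordered basis $(U, X, V)$ produces exactly the upper-triangular matrix displayed in the lemma.
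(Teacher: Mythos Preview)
Your proof is correct and follows essentially the same route as the paper: both arguments rest on the identity $\operatorname{Ad}(e^{-U}) = e^{\operatorname{ad}_{-U}}$ together with the explicit bracket computations $[U,X]=-U$, $[U,V]=2X$, and the nilpotency of $\operatorname{ad} U$. The only cosmetic difference is that the paper conjugates the one-parameter subgroup first, writing $e^{-U}e^{tY}=e^{t\,e^{\operatorname{ad}_{-U}}(Y)}e^{-U}$ and then differentiates, whereas you differentiate first to get $e^{-U}Y$ and then conjugate the Lie algebra element; the content is identical.
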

\begin{proof}
%Observe $$\frac{d}{dt}(e^{-U} e^{t U})|_{t = 0} = \frac{d}{dt}(e^{t U} e^{-U})|_{t = 0} = U e^{-U}.$$

%Next, by Propositions 2.3 and 2.25 of (Brian Hall, intro to rep. theory), 
We have 
$$= %= e^{-U} e^{t X} e^{U} e^{-U} = e^{e^{-U} (t X) e^{U}} e^{-U}$$ $$= exp(Ad_{e^{-U}} (t X)) e^{-U} = exp((e^{ad_{-U}}(tX)) e^{-U} =
 e^{te^{ad_{-U}}(X)} e^{-U}$$ Note $ad_{-U}(X) = [-U, X] = U$ so% that $$e^{ad_{-U}}(X) = (X + ad_{-U}(X) + ad_{-U}^2(X) + \cdots) = X + U.$$  this means 
% $$\frac{d}{dt}(e^{-U} e^{t X})|_{t = 0} %= e^{t(X + U)} e^{-U},$$ and therefore $$ = 
$$\frac{d}{dt}(e^{U} e^{t X})|_{t = 0} = (X + U) e^{-U}.$$

Similarly, $$e^{-U} e^{t V} = e^{t(e^{ad_{-U}}(V))} e^{-U},$$ so   %Notice $$e^{ad_{-U}}(V) = V + ad_{-U}(V) + \frac{ad_{-U}^2(V)}{2} + \frac{ad_{-U}^3(V)}{3!} + \cdots),$$ and becauseBecause $ad_{-U}(V) = -2 X$ and $ad_{-U}(-X) = -U,$ we conclude $$e^{ad_U}(V) = V - 2X - U.$$  Therefore, 
$$
\frac{d}{dt}(e^{-U} e^{t V})|_{t = 0} = (V - 2 X - U) e^{-U}. 
$$
\end{proof}

This concludes the proof of Lemma \ref{lower_star}. $ \ \ \Box$ \\

The action induced first coboundary operators on vector fields in $sl(2, \R) \times sl(2, \R)$ are described in Section \ref{s:cohomology}, equation (\ref{cohom_vector}).  Let 
$$
\begin{array}{cc}
(\delta_v^1)((1, 0)) = \mathbb{L}_1 H \\
(\delta_v^1)((0, 1)) = \mathbb{L}_2 H,
\end{array}
$$
where we use $(1, 0)$ and $(0, 1)$ as generators of the acting group $\Z^2.$
Then $\mathbb L_1H = \left( e^{T U_1} \right)_*H-H$ and $\mathbb L_2H= \left(e^{S U_2}\right)_* H-H$, for any vector field $H\in \rm{Vect}^\infty(G / \Gamma)$.  
From (\ref{cohom_vector}), the cocycle generated by the vector field $(F, Y)$ is in Ker$(\delta_v^2)$ if and only if (\ref{equa:cocycle_eqn}) is satisfied.

%This implies directly just as in \cite{DK} (Section 4), that solving  the splitting problem for vector fields can be done inductively from the corresponding solutions in the space of smooth functions, obtained in Sections 2 and 3.  The loss of derivatives after the splitting for a given regularity $r$ approximately $9r+12$. Hence we have the splitting in cohomology with coefficients in smooth vector fields:

\subsection{Trivialization of cohomology with coefficients in vector fields and splitting}\label{s:vect}

The following proposition describes the first coboundaries with coefficients in constant vector fields.

\begin{proposition}\label{constant_vf}
Let $F = (h_1, g_1, f_1, h_3, g_3, f_3)^\tau, Y= (h_2, g_2, f_2, h_4, g_4, f_4)^\tau \in sl(2, \R)$ $ \times sl(2, \R)$ be constant coefficient vector fields.  If  
\begin{equation}\label{equa:cocycle_eqn}
\mathbb L_2 F - \mathbb L_1 Y = 0,
\end{equation}
then 
%\begin{equation}\label{equa:cocycle_constant}
\begin{equation}\label{equa:cocycle}
F \in \R U_1 + \R V_1 + \R X_1  + \R U_2  , \ \ Y \in \R U_2+ \R V_2 + \R X_2 + \R U_1.
\end{equation}
%\end{equation}

Furthermore, if $(F,  Y)$ is a coboundary, then 
%\begin{equation}\label{constant_coboundary}
\begin{equation}\label{equa:coboundary}
F \in \R U_1 + \R X_1, \ \ Y \in \R U_2 + \R X_2.
\end{equation}
%\end{equation}
Therefore the cohomology over the given $\mathbb Z^2$ parabolic action with coefficients in constant vector fields is 4-dimensional and the cohomology classes are parametrized by cocycles $(F, Y)$ where $F\in \R U_2+\R V_1$ and $Y\in \R U_1+ \R V_2$.
\end{proposition}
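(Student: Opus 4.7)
The plan is to reduce the statement to a direct linear algebra exercise using the matrix formula of Lemma \ref{lower_star}. Because $(e^{U_1})_*$ acts nontrivially only on the $(U_1, X_1, V_1)$ coordinates of the first $sl(2,\R)$ factor and as the identity on the second factor (and symmetrically for $(e^{U_2})_*$), the coboundary operators $\mathbb L_1$ and $\mathbb L_2$ on constant vector fields act on disjoint coordinate blocks. This is the key structural observation; after it, everything is bookkeeping.

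First I would write the operators $\mathbb L_1$ and $\mathbb L_2$ explicitly in coordinates. From Lemma \ref{lower_star}, for a vector with first-factor coordinates $(h, g, f)^\tau$ the operator $(e^{U_1})_* - I$ equals the strictly upper-triangular matrix with rows $(0,1,-1)$, $(0,0,-2)$, $(0,0,0)$. Hence for $Y = (h_2, g_2, f_2, h_4, g_4, f_4)^\tau$ and $F = (h_1, g_1, f_1, h_3, g_3, f_3)^\tau$ one has
\[
\mathbb L_1 Y = (g_2 - f_2,\ -2 f_2,\ 0,\ 0,\ 0,\ 0)^\tau,\qquad \mathbb L_2 F = (0,\ 0,\ 0,\ g_3 - f_3,\ -2 f_3,\ 0)^\tau.
\]
The cocycle condition $\mathbb L_2 F - \mathbb L_1 Y = 0$ then decouples completely into the two independent systems $g_2 = f_2 = 0$ and $g_3 = f_3 = 0$, which is precisely (\ref{equa:cocycle}).

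Next I would carry out the analogous calculation for coboundaries. A constant $H$ splits as first-factor part $(h, g, f)^\tau$ and second-factor part $(h', g', f')^\tau$, and by the same matrix formula $\mathbb L_1 H$ is supported on the $U_1, X_1$ directions with coordinate values $(g - f, -2 f)$ there; the map $(g, f) \mapsto (g - f, -2 f)$ is a linear bijection onto $\R^2$. Therefore $\mathbb L_1 H$ ranges exactly over $\R U_1 + \R X_1$ and has no $V_1$ or second-factor component, and symmetrically $\mathbb L_2 H$ ranges over $\R U_2 + \R X_2$. This forces any coboundary $(F, Y)$ to satisfy (\ref{equa:coboundary}), and conversely every such pair is realized.

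Finally, because $\mathbb L_1$ depends only on the first-factor part of $H$ and $\mathbb L_2$ only on the second-factor part, one may adjust $F$ and $Y$ independently by coboundaries. Modding out the cocycle space from (\ref{equa:cocycle}) by the coboundary space from (\ref{equa:coboundary}) leaves representatives $F \in \R V_1 + \R U_2$ and $Y \in \R U_1 + \R V_2$, hence a four-dimensional cohomology, as claimed. I do not expect any analytic obstacle here; the only point requiring care is consistent indexing of the six coordinates.
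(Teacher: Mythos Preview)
Your proof is correct and follows essentially the same route as the paper: both reduce the cocycle and coboundary conditions to the explicit matrix form of $(e^{U})_* - I$ from Lemma \ref{lower_star}, obtaining $f_2=g_2=f_3=g_3=0$ for cocycles and the image $\R U_1+\R X_1$ (resp.\ $\R U_2+\R X_2$) for coboundaries. The only cosmetic difference is that the paper extracts these equations by specializing the system (\ref{cocycle_matrix}) from the proof of Proposition \ref{splitVF} to constants, whereas you compute $\mathbb L_1$, $\mathbb L_2$ on constant vectors directly from Lemma \ref{lower_star}.
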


The proof of Proposition \ref{constant_vf} will be apparent from the proof of Proposition \ref{splitVF}, so we defer it until then.  Proposition \ref{splitVF} shows that the first cohomology with coefficients in smooth vector fields reduces to the first cohomology with constant vector fields.  Notice that by integrating each coefficient over $G / \Gamma$, we obtain a constant coefficient vector field.  For a smooth vector field $F$ we denote by $Ave F$ the constant vector field obtained from $F$ by taking averages of all coordinate functions of $F$ in the basis of $sl(2, \mathbb R)\times sl(2, \mathbb R)$ described above. We say $(Ave F, Ave Y)$ is in the trivial cohomology class if (\ref{equa:coboundary}) is satisfied for $(Ave F, Ave Y)$.
\begin{proposition}\label{splitVF}
If $F, Y, \Phi \in  \rm{Vect}^\infty(G / \Gamma)$ and $\mathbb L_2 F - \mathbb L_1 Y = \Phi$. Assume also that $(Ave F, Ave Y) $ 
is in the trivial cohomology class.  Then there exist $H, \tilde F, \tilde Y \in  \rm{Vect}^\infty(G / \Gamma)$ such that $F= \mathbb L_1H+\tilde F$ and $Y= \mathbb L_2H+\tilde Y$, and for every $r>0$ and $s\ge 27r+130$ there is constant $C_{s, r, S, T, \Gamma}  > 0$ such that the following estimates hold:
$$
\|\tilde F\|_r \leq C_{s, r, S, T, \Gamma} \| \Phi\|_s,
$$ 
$$
\|\tilde Y\|_r \leq C_{s, r, S, T, \Gamma}\| \Phi\|_s,
$$ and 
$$
\| H \|_r \leq C_{s, r, S, T, \Gamma}\| F\|_s.
$$
Moreover, for $(S, T)$ in an $\epsilon$ neighborhood of $(1,1)$, the constants   $C_{s, r, S, T, \Gamma}$ are uniformly bounded in $S, T$. 
\end{proposition}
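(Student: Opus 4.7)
The plan is to reduce the vector-field equation $\mathbb L_2 F - \mathbb L_1 Y = \Phi$ to a triangular system of six scalar cocycle equations and then invoke Theorems \ref{transfer} and \ref{split} layer by layer. Expanding $F = (h_1, g_1, f_1, h_3, g_3, f_3)^\tau$ and $Y = (h_2, g_2, f_2, h_4, g_4, f_4)^\tau$ in the basis $(U_1, X_1, V_1, U_2, X_2, V_2)$, Lemma \ref{lower_star} shows that $(e^{U_1})_*$ is upper triangular with ones on the diagonal on the first-factor coefficients and acts as the identity on the second-factor coefficients (with each entry then post-composed with $u_1$); the action of $(e^{U_2})_*$ has the symmetric structure. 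Consequently the six coefficient equations split into three layers coupled from below: the $V_1$- and $V_2$-coefficients form independent scalar cocycle equations, the $X_j$-coefficients couple to the $V_j$ pieces through the off-diagonal matrix entries, and the $U_j$-coefficients couple to all lower coefficients.

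First I would peel off the averages. Setting $F_0 := \mathrm{Ave}\,F$ and $Y_0 := \mathrm{Ave}\,Y$, the hypothesis that $(F_0, Y_0)$ lies in the trivial constant-coboundary class, together with the matrix computation behind Proposition \ref{constant_vf}, produces a constant vector field $H_0 \in \R U_1 + \R X_1 + \R U_2 + \R X_2$ with $\mathbb L_1 H_0 = F_0$ and $\mathbb L_2 H_0 = Y_0$. Replacing $F$ by $F - F_0$ and $Y$ by $Y - Y_0$, one may assume that every coefficient has zero average on $G/\Gamma$, so that Theorem \ref{split} is applicable in what follows; the final transfer will be $H = H_0 + H'$.

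Next I would solve the residuals in three layers. In Layer~1, apply Theorem \ref{split} to $(f_1, f_2, \Phi_{V_1})$ to obtain $r_1$ and residuals $\tilde f_1 = f_1 - L_1 r_1$, $\tilde f_2 = f_2 - L_2 r_1$ satisfying $L_2 \tilde f_1 - L_1 \tilde f_2 = \Phi_{V_1}$ with Sobolev loss $3r+10$, and do the analogous step for $(f_3, f_4, \Phi_{V_2})$. In Layer~2, using the elementary identity $L_2(r_1 \circ u_1) = (L_2 r_1) \circ u_1$ (which uses that $u_1$ and $u_2$ commute), the $X_1$-coefficient cocycle equation for the shifted data $g_1 + 2 r_1 \circ u_1$ becomes
$$
L_2(g_1 + 2 r_1 \circ u_1) - L_1 g_2 = \Phi_{X_1} - 2 \tilde f_2 \circ u_1,
$$
whose right-hand side is controlled in norm by $\|\Phi\|_{9r + 40}$ via the Layer~1 residual bound; Theorem \ref{split} then produces $q_1$ with residuals $\tilde g_1, \tilde g_2$, and symmetrically for the $X_2$-coefficient. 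One more repetition in Layer~3, now shifting $h_1$ by $r_1 \circ u_1 - q_1 \circ u_1$, yields $p_1$ and the $U_1, U_2$ residuals whose right-hand side is controlled by $\|\Phi\|_{27r + 130}$. Assembling the scalar pieces delivers $\tilde F, \tilde Y$ and $H'$ with the claimed structural identities; the residual estimate exponent $27r + 130$ is exactly the three-fold compounding $r \mapsto 3r + 10 \mapsto 9r + 40 \mapsto 27r + 130$ of the basic loss from Theorem \ref{split}. For the bound $\|H\|_r \le C \|F\|_s$, one exploits the fact that the remainder pair $(F - \tilde F, Y - \tilde Y)$ satisfies the \emph{exact} cocycle equation $\mathbb L_2 (F - \tilde F) - \mathbb L_1 (Y - \tilde Y) = 0$, so the sharper Theorem \ref{transfer} (loss $3r + 6$) is iterated through the three layers to bound each component of $H'$ in terms of the corresponding coefficients of $F$ together with the previously controlled residual terms.

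The principal obstacle is the Sobolev bookkeeping through the triangular reduction: each layer feeds its transfer function into the data of the next layer, and every such substitution composes the basic loss of Theorem \ref{split} with itself. Once this compounding is tracked carefully, everything else is routine. Uniformity of the constants for $(S, T)$ in a small neighborhood of $(1, 1)$ is inherited from Remark \ref{rmk:Tvariable} applied to each scalar application of Theorems \ref{transfer} and \ref{split}, since these are the only ingredients whose constants depend on the time parameter. Finally, Proposition \ref{constant_vf} falls out of the same analysis restricted to constant-coefficient data, since in that case $L_1$ and $L_2$ annihilate all non-matrix terms and the only surviving equations are the elementary matrix ones.
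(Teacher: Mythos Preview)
Your proposal is correct and follows essentially the same approach as the paper: write the vector-field equation in the basis $(U_1,X_1,V_1,U_2,X_2,V_2)$, use the upper-triangular form of $(e^{U_j})_*$ from Lemma~\ref{lower_star} to peel off three scalar layers, and apply Theorem~\ref{split} at each layer so that the Sobolev loss compounds as $3r+10\to 9r+40\to 27r+130$. The only cosmetic differences are that the paper absorbs averages by adjusting each scalar transfer function by a constant as it goes (rather than subtracting a constant $H_0$ at the outset), and it reads off the bound on $\|H\|_r$ directly from the $\|P\|_r\le C\|f\|_{9r+25}$ clause of Theorem~\ref{split} rather than appealing separately to Theorem~\ref{transfer}.
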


\begin{proof}

We first prove the claim for $T=S=1$. Recall that $u_1 := e^{U_1}$ and $u_2 := e^{U_2}.$  Using the coordinates (\ref{coordinates}), let $F = (h_1, g_1, f_1, h_3, g_3, f_3)^\tau $,  $Y = (h_2, g_2, f_2, h_4, g_4, f_4)^\tau $,  and $\Phi = (\phi^1, \phi^2, \phi^3, \phi^4, \phi^5, \phi^6)^\tau$.  

$\mathbb L_2 F - \mathbb L_1 Y = \Phi$ implies 
$$
\left(\begin{array}{cccccc} h_1 \\ g_1 \\ f_1 \\ h_3  -2 g_3  - f_3 \\  g_3 +  f_3  \\  f_3  \end{array}\right)\circ u_2  - \left(\begin{array}{llllll} h_1 \\ g_1 \\ f_1 \\ h_3 \\ g_3 \\ f_3\end{array}\right)
$$ 
\begin{equation}\label{cocycle_matrix} 
- \left(\begin{array}{cccccc} h_2  -2 g_2  - f_2 \\  g_2 +  f_2  \\  f_2 \\  h_4 \\  g_4 \\  f_4 \end{array}\right)\circ u_1 + \left(\begin{array}{lll} h_2  \\ g_2 \\ f_2 \\  h_4 \\ g_4 \\ f_4\end{array}\right) = \left(\begin{array}{llllll} \phi^1\\ \phi^2 \\ \phi^3 \\ \phi^4 \\ \phi^5 \\ \phi^6 \end{array}\right).
\end{equation}

We consider the first three coordinates.  Then $L_2 f_1 - L_1 f_2 = \phi_3.$  Then Theorem \ref{split} implies that if $Ave f_1=Ave f_2=0$ then  there is some $P \in C^\infty(G / \Gamma)$ and a constant $C_{r, \Gamma} > 0$ such that 
\begin{equation}\label{f_estimate}
\begin{array}{ll} 
\|f_1 - L_1P\|_r \leq C_{r, \Gamma} \|\phi_3\|_{3r + 10} \\
\|f_2 - L_2P\|_r \leq C_{r, \Gamma} \|\phi_3\|_{3r + 10}.
\end{array}
\end{equation}
Next, 
$$
L_2g_1 - L_1g_2 - f_2 \circ u_1 = \phi_2.
$$  
By (\ref{f_estimate}), there is some $\tilde \phi_3 \in C^\infty(G / \Gamma)$ such that 
$$
f_2 = L_2 P + \tilde \phi_3 ,
$$
where $\|\tilde \phi_3\|_r \leq C_{r, \Gamma} \|\phi_3 \|_{3r + 10}$ for all $r \geq 0$.  Therefore, 
$$
L_2(g_1 - P \circ u_1) - L_1 g_2 = \phi_2 + \tilde \phi_3 \circ u_1.
$$
Applying Theorem \ref{split} again shows that if $Ave g_2=0$ then there is some $Q \in C^\infty(G / \Gamma)$ such that 
\begin{equation}\label{g_estimate}
\begin{array}{ll}
\|(g_1 - P \circ u_1) - L_1 Q\|_r \leq C_{r, \Gamma} \|\phi_2 + \tilde \phi_3\|_{3r + 10} \\
\|g_2 - L_2 Q\|_r \leq C_{r, \Gamma} \|\phi_2 + \tilde \phi_3\|_{3r + 10}.
\end{array}
\end{equation}
Notice that we did not need to assume that $Ave g_1=0$ because we have a freedom to choose $P$ up to a constant, so we chose it so that $Ave( g_1-P\circ u_1)=0$.
Now 
$$
L_2 h_1 - L_1 h_2 + 2 g_2 \circ u_1 + f_2 \circ u_1 = \phi_1.
$$
By (\ref{g_estimate}), 
$$
g_2  = L_2 Q + \tilde \phi_2,
$$
where $\|\tilde \phi_2\|_r \leq C_{r, \Gamma} \|\phi_2 - \tilde \phi_3\|_{3r + 10}$ for all $r \geq 0$.
Then 
$$
L_2(h_1 + 2 Q \circ u_1 + P \circ u_1) - L_1 h_2 = \phi_1 - 2 \tilde \phi_2 \circ u_1 - \tilde \phi_3 \circ u_1 .
$$
By Theorem \ref{split} again, if $Ave h_2=0$ (and after adjusting $Q$ by a constant so that $Ave(h_1 + 2 Q \circ u_1 + P \circ u_1)=0$), there is some $R \in C^\infty(G / \Gamma)$ such that 
$$
\begin{array}{ll}
\|(h_1 + 2 Q \circ u_1 + P\circ u_1 - L_1 R\|_r \leq C_{r, \Gamma} \|\phi_1-  2\tilde \phi_2 \circ u_1 - \tilde \phi_3 \circ u_1\|_{3r + 10} \\
\|h_2 - L_2 R\|_r \leq C_{r, \Gamma} \|\phi_1-  2\tilde \phi_2 \circ u_2 - \tilde \phi_1 \circ u_2\|_{3r + 10}.
\end{array}
$$

Then repeating the process for the bottom three coordinates, we find a vector field $H$ and a remainder $\tilde F \in \rm{Vect}^\infty(G / \Gamma)$ such that  
$$
F = \mathbb L_1 H + \tilde F,
$$
where 
$$
\mathbb L_1 = \left(\begin{array}{cccccc} L_1 & -2 (u_1)^* & -(u_1)^* \\  & L_1 & (u_1)^* \\  &  & L_1\\  & & & L_1 &  & \\ & & &   & L_1 &  \\ & & &  &  & L_1 \end{array}\right)
$$
and $(u_1)^* f = f \circ u_1$.  
Similarly, $Y = \mathbb L_2H+\tilde Y$.  The following estimates are satisfied:
\begin{equation*}
\begin{aligned}
\|\tilde F\|_r&\le C_{s,r, \Gamma}\|\Phi\|_s\\ 
\|\tilde Y\|_r&\le C_{s,r, \Gamma}\|\Phi\|_s\\ 
\|H\|_r&\le C_{s,r, \Gamma}\|F\|_s\\ 
\end{aligned}
\end{equation*}
where $s\ge 3 ( 3 (3r + 10) + 10) + 10 = 27r+130$.

Lastly and in a completely analogous manor, the same holds for the maps $e^{T U_1}$ and $e^{S U_2}$.  The fact that the constant $C_{s, r, S, T, \Gamma} > 0$ is uniformly bounded in $(S, T)$ in an $\epsilon > 0$ neighborhood of (1, 1) follows from the estimates in the proof of Theorem 1.2 of \cite{T}.
\end{proof}

\textit{Proof of Proposition }\ref{constant_vf}:

We first consider the case when $(F, Y)$ is a (constant coefficient) cocycle.  Now $\Phi = 0$, so (\ref{cocycle_matrix}) implies (\ref{equa:cocycle}) after all the coefficients are substituted by constants, namely that in this case$f_2=f_3=g_2=g_3=0$.  When $(F, Y)$ reduces to a coboundary then the proof above shows that the necessary conditions for that are: $f_1=h_3=g_3=f_3=h_2=g_2=f_2=f_4=0$.  This implies (\ref{equa:coboundary}) and concludes the proof of Proposition \ref{constant_vf}. $\ \ \Box$

\subsection{Transversal local rigidity conjecture}

Consider the family of $\mathbb Z^2$ actions $\{\rho_{\lambda}\}$ on $SL(2, \mathbb R)\times SL(2, \mathbb R)/\Gamma$, where $\lambda=(\lambda_1, \lambda_2)\in B_\epsilon(0,0)\subset \mathbb R^2$ and each $\rho_{\lambda}$ is homogeneous action generated by the following elements of $SL(2, \mathbb R)\times SL(2, \mathbb R)$: $$
\left(\begin{array}{cc}
 1& S\\
  \lambda_1& 1  
\end{array}
\right)\times \rm{Id},\,\,\,\,
\rm{Id}\times \left(\begin{array}{cc}
 1& T\\
  \lambda_2& 1  
\end{array}
\right)$$ 

\begin{conj} Every sufficiently transversal and sufficiently small perturbation $\{\tilde \rho_\lambda\}$ of the family  $\{ \rho_\lambda\}$ in a neighborhood of $\lambda=(0,0)$, contains a parameter $\bar\lambda$ such that  $\tilde \rho_{\bar \lambda}$ is a parabolic action smoothly conjugate to the homogeneous  $\mathbb Z^2$ action generated by 
$$
\left(\begin{array}{cc}
 1& T'\\
  0& 1  
\end{array}
\right)\times \rm{Id},\,\,\,\,
\rm{Id}\times \left(\begin{array}{cc}
 1& S'\\
  0& 1  
\end{array}
\right)$$ 
for some $T'$ and $S'$ in $\mathbb R$.

\end{conj}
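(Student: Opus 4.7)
The plan is to set up a Hamilton--Nash--Moser type iterative scheme along the lines of \cite{DK} and \cite{D}, now adapted to the discrete parabolic situation, using Proposition \ref{splitVF} as the linearized splitting step and Proposition \ref{constant_vf} to absorb the finite-dimensional cohomology classes into motion along the family $\{\rho_\lambda\}$. Given a transversal perturbation $\{\tilde\rho_\lambda\}$ of $\{\rho_\lambda\}$, I would reformulate the problem of finding a smooth conjugacy between $\tilde\rho_{\bar\lambda}$ and a pure upper unipotent action as a nonlinear functional equation $\mathcal F(h,\lambda,T',S')=0$ on a product of a Fr\'echet space of diffeomorphisms and the four-dimensional parameter $(\lambda,T',S')\in \mathbb R^4$. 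The linearization of $\mathcal F$ at the unperturbed action is precisely the coboundary operator $(\mathbb L_1,\mathbb L_2)$ acting on smooth vector fields.

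The first step is to check that the linearized problem admits a right inverse up to a four-dimensional cokernel. Proposition \ref{constant_vf} identifies this cokernel: cocycles are generated modulo coboundaries by $U_2,V_1$ in the first factor and $U_1,V_2$ in the second, so two of the directions ($U_1,U_2$ in the ``other'' factor) correspond to reparametrizations of the time parameters $T,S$, and the other two ($V_1,V_2$) correspond exactly to the transversal family parameters $\lambda_1,\lambda_2$. This is where the transversality hypothesis enters: for a sufficiently transversal deformation $\{\tilde\rho_\lambda\}$, the implicit function theorem in the four parameter directions $(\lambda_1,\lambda_2,T',S')$ is invertible, so given an obstruction vector $(\mathrm{Ave}\,F,\mathrm{Ave}\,Y)$ produced by Proposition \ref{splitVF}, one can adjust $(\lambda,T',S')$ to kill it. Having done so, Proposition \ref{splitVF} then produces $H,\tilde F,\tilde Y$ with $F=\mathbb L_1 H+\tilde F$, $Y=\mathbb L_2 H+\tilde Y$, and the corresponding diffeomorphism $\exp H$ conjugates $\tilde\rho_{\bar\lambda}$, after parameter correction, to an action whose distance from the target is quadratically small in the $C^s$ norm (modulo loss of derivatives).

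The iteration then proceeds in the standard KAM fashion: alternate applying Proposition \ref{splitVF} with smoothing operators and parameter corrections, and control the cumulative loss of derivatives by working between an increasing sequence of Sobolev exponents. The convergence in $C^\infty$ of both the sequence of conjugacies and the sequence of parameters $(\lambda^{(n)},T^{(n)},S^{(n)})$ would give the desired $\bar\lambda$, $T'$, $S'$ and the smooth conjugacy, thereby proving the conjecture. The uniformity part of Proposition \ref{splitVF} in $(S,T)$ near $(1,1)$ is crucial here, since the time parameters drift along the iteration.

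The main obstacle is the non-tameness of the splitting estimates: Proposition \ref{splitVF} requires $s\geq 27r+130$, i.e.\ the loss of derivatives grows \emph{linearly in} $r$ rather than being fixed. A standard Nash--Moser scheme requires a \emph{fixed} loss of derivatives independent of $r$, and this is exactly the point the authors themselves flag after Conjecture 5.1. To push the argument through, I would either (i) first establish the tame version of Lemma \ref{3} for the horocycle map---improving the exponent in (\ref{301}) to $r+c$ for some fixed $c$, which is the conjectured improvement alluded to in the introduction---which would immediately make Theorems \ref{transfer} and \ref{split} and hence Proposition \ref{splitVF} tame; or (ii) adapt the hyper-fast-converging variants of Nash--Moser (\`a la Hamilton) that tolerate polynomial loss, provided one can control the analytic-type norms needed for the perturbative step. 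Either route ultimately reduces the conjecture to a sharp tame estimate for the first coboundary over the horocycle map, which is the genuinely new analytic input that remains to be supplied.
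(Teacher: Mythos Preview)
The statement is a \emph{conjecture}, not a theorem: the paper does not prove it, and there is no ``paper's own proof'' to compare against. What follows the conjecture in the paper is a discussion of obstacles, not an argument.

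Your proposal is not a proof either, but a plan, and as a plan it matches the paper's own assessment almost exactly. The paper says explicitly that if the splitting in Proposition~\ref{splitVF} were tame, the conjecture would follow by the method of \cite{DK} adapted to the discrete case; you spell out what that method would be (Nash--Moser/KAM with parameter adjustment to kill the four-dimensional cohomology of Proposition~\ref{constant_vf}), and you correctly identify the same obstruction: the multiplicative loss $s\geq 27r+130$ coming from Lemma~\ref{3}. Your identification of the four cohomology directions with the parameters $(\lambda_1,\lambda_2,T',S')$ is also consistent with the paper's setup of the family $\{\rho_\lambda\}$.

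Where you go slightly beyond the paper is in your route (ii), invoking Nash--Moser variants that tolerate polynomial loss. Be careful here: the loss in Proposition~\ref{splitVF} is not ``$r+p(r)$ derivatives'' for a polynomial $p$ with fixed degree input; it is a \emph{multiplicative} loss $s\approx 27r$, so each application roughly multiplies the required regularity by a constant. Standard accelerated-convergence schemes (Hamilton, Zehnder) handle a \emph{fixed additive} loss, and even the more permissive analytic-category schemes do not obviously absorb a loss that compounds geometrically across iterations. So route (ii) is speculative and would itself require a new idea; the paper does not suggest it, and neither does the existing Nash--Moser literature in this form. Route (i), sharpening Lemma~\ref{3} to a tame estimate, is precisely what the authors flag as the missing analytic input, and they discuss why the Flaminio--Forni method does not directly yield it. In short: your reduction is correct and agrees with the paper, but the conjecture remains open because the tame estimate for the horocycle map is open.
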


We remark that if the estimates in the splitting construction in Proposition \ref{splitVF} were \emph{tame} i.e. if the loss of regularity for $\tilde F, \tilde Y$ and $H$ was \emph{fixed} and independent of $r$, for every given regularity $r$, then the conjecture would follow by the similar approach as in \cite{DK}, adapted to the discrete case. Now the fact that the estimates in Proposition \ref{splitVF} are not tame comes directly from not having tame estimates for the solution of the coboundary equation for functions over the discrete unipotent map on $SL(2, \mathbb R)/\Gamma$, obtained in \cite{T}. 

  The authors believe that obtaining tame estimates for the solution of the coboundary equation for functions over the discrete unipotent map on $SL(2, \mathbb R)/\Gamma$ is a  challenging problem.  The second author obtained the non tame estimates through an analysis in irreducible models \cite{T}.  It is not clear to the authors how to substantially improve these estimates, however a more precise analysis may be possible. %The method in \cite{T} solves the cohomological equation in concrete Hilbert spaces by using explicit formulas for vector fields.  
 Flaminio-Forni proved tame estimates for the cohomological equation of the horocycle flow in \cite{FF}, but their method does not seem to work for the horocycle map. The important point for the argument of Flaminio-Forni is that the restriction of the generator for the horocycle flow to each irreducible component $\mathcal H_\mu$ of $L^2(SL(2, \R) / \Gamma)$ takes a relatively simple form in terms of the bases $\{u_{\mu, k}\}_{k}$.  This allowed them to determine the invariant distributions by their values on basis elements in $\mathcal H_\mu$, and then construct a Green's operator.   The case of the time-one map is different.  The operator $e^U$ is a complicated infinite dimensional matrix in each basis $\{u_{\mu, k}\}_{k}$, and it becomes unclear how to proceed.  
 Another approach is to obtain tame estimates for cohomological equations that are more similar to that of the horocycle flow.  For example, it is possible to derive the cohomological equation for the time-1 horocycle map from the cohomological equations $(U + in) f = g$, for $n \in \Z$.


\begin{thebibliography}{99}

\bibitem{D} {  D. Damjanovi\'c},  Perturbations of smooth actions with non-trivial cohomology.  Preprint.

\bibitem{DK} D. Damjanovi\'c, A. Katok, Local rigidity of homogeneous parabolic actions: I. A model case, Journal of Modern Dynamics, 5, N2, (2011)., 203--235.

\bibitem{FK} R. Feres, A. Katok, Ergodic theory and dynamics of G-spaces (with special em-
phasis on rigidity phenomena), Handbook of dynamical systems, North-Holland, Amsterdam, 1A (2002), 665Ð763.

\bibitem{G} R. Godemont, \emph{Sur la th$\acute{\textit{e}}$ori des repr$\acute{\textit{e}}$sentations unitaires.}  Ann. of Math. (2) \textbf{53} (1951) 68-124. MR 12,421d

\bibitem{FF} {L. Flaminio, G. Forni},  Invariant Distributions and Time Averages for Horocycle Flows.
{\em Duke J. of Math}, {\bf 119},  (2003), No 3, 465--526.

\bibitem{Ma1} F. Mautner, \emph{Unitary representations of locally compact groups . I} Ann. of Math. (2) \textbf{51} (1950), 1-25. MR 11,324d

\bibitem{Ma2} F. Mautner, \emph{Unitary representations of locally compact groups . II} Ann. of Math. (2) \textbf{52} (1950), 528-556. MR 12,157d

\bibitem{M} {D. Mieczkowski}, The first cohomology of parabolic actions for some higher-rank abelian groups and representation theory, Journal of Modern Dynamics, {\bf 1}, no. 1, (2007), 61--92.

\bibitem{KM} D. Kleinbock, G. Margulis. \emph{Logarithm Laws for Flows on Homogeneous Spaces}.  Invent. Math., 1999, \textbf{138}, no. 3, 451 -494.

\bibitem{KS} D. Kelmer, P. Sarnak \emph{Strong spectral gaps for compact quotients of products of }$\mathit{PSL(2, \R)}$.  J. Eur. Math. Soc. 11, (2009) 283 - 313.

\bibitem{R} {F. Ramirez} \emph{Cocycles over higher-rank abelian actions on quotients of semisimple Lie groups}. 
J. Mod. Dyn. 3, no. 3, (2009) 335Ð357. 

%\bibitem{Shalom} Y. Shalom, \emph{Explicit Kazhdan constants for representations of Semisimple and Arithmetic Groups}. Annales de l'Institut Fourier, (2000), 50, no. 3, 833-863. 

\bibitem{T} J. Tanis, \emph{The Cohomological Equation and Invariant Distributions for Horocycle Maps}.  Ergodic Theory and Dynamical systems, 12, (2012), 1-42, 10.1017/etds.2012.125

\end{thebibliography}
\end{document}